\newtheorem{corol}{Corollary}[section]
\newtheorem{definition}{Definition}[section]  
\newtheorem{lemma}{Lemma}[section]
\newtheorem{theorem}{Theorem}[section]
\newcommand\mbB{{\mathbb B}}
\newcommand\mbD{{\mathbb D}}
\newcommand\mcD{{\mathcal D}}
\newcommand\mcF{{\mathcal F}}
\newcommand\mbH{{\mathbb H}}
\newcommand\mcH{{\mathcal H}}
\newcommand\bK{{\bf K}}
\newcommand\mbK{{\mathbb K}}
\newcommand\mcK{{\mathcal K}}
\newcommand\bM{{\bf M}}
\newcommand\bm{{\bf m}}
\newcommand\mcM{{\mathcal M}}
\newcommand\mcN{{\mathcal N}}
\newcommand\mbR{{\mathbb R}}
\newcommand\mcS{{\mathcal S}}
\newcommand\bY{{\bf Y}}
\newcommand\mcY{{\mathcal Y}}
\newcommand\bvep{\boldmath{\varepsilon}}
\newcommand\bmu{\boldmath{\mu}}
\DeclareMathOperator*\argmin{argmin}
\DeclareMathOperator\Cov{Cov}
\DeclareMathOperator\E{E}
\DeclareMathOperator\Span{span}
\begin{document}

\begin{frontmatter}
\title{Formal Privacy for Functional Data \\ with Gaussian Perturbations}
\runtitle{Privacy for FDA}

\begin{aug}
		\author{ \fnms{Ardalan} \snm{Mirshani} 
			\ead[label=e1]{azm245@psu.edu}
			\ead[label=u1,url]{www.ArdalanMirshani.com}}
		\author{ \fnms{Matthew} \snm{Reimherr}\thanksref{t1}
			\ead[label=e2]{mreimherr@psu.edu}
			\ead[label=u2,url]{www.personal.psu.edu/mlr36}}
		\and
		\author{ \fnms{Aleksandra} \snm{Slavkovic} \thanksref{t2}
			\ead[label=e3]{sesa@psu.edu}
			\ead[label=u3,url]{www.personal.psu.edu/abs12}}

		\thankstext{t1}{Suported by NSF DMS-1712826}
		\thankstext{t2}{Supported by  NSF SES-1534433, NIH UL1 002014}
		\runauthor{Mirshani, Reimherr, and Slavkovic}
		
		\affiliation{Pennsylvania State University}
		

	\end{aug}







\begin{abstract}
	Motivated by the rapid rise in statistical tools in {\it Functional Data Analysis}, we consider the Gaussian mechanism for achieving differential privacy with parameter estimates taking values in a, potentially infinite-dimensional, separable Banach space.  Using classic results from probability theory, we show how densities over function spaces can be utilized to achieve the desired differential privacy bounds.  This extends prior results of \citet{hall:rinaldo:wasserman:2013} to a much broader class of statistical estimates and summaries, including ``path level" summaries, nonlinear functionals, and full function releases.  By focusing on Banach spaces, we provide a deeper picture of the challenges for privacy with complex data, especially the role regularization plays in balancing utility and privacy.  Using an application to penalized smoothing, we explicitly highlight this balance in the context of mean function estimation.  Simulations and an  application to {diffusion tensor imaging} are briefly presented, with extensive additions included in a supplement.
\end{abstract}

	\begin{keyword}
		\kwd{Differential Privacy}
		\kwd{Functional Data Analysis}
		\kwd{Banach Space}
		\kwd{Hilbert Space}
		\kwd{Density Estimation}
		\kwd{Reproducing Kernel Hilbert Space}
	\end{keyword}

\end{frontmatter}

 \section{Introduction}
 
 New studies, surveys, and technologies are resulting in ever richer and more informative data sets.  Data being collected as part of the ``big data revolution'' occurring in the sciences have dramatically expanded the pace of scientific progress over the last several decades, but often contain a significant amount of personal or subject level information. These data and their corresponding analyses present substantial challenges for preserving subjects' privacy as researchers attempt to understand what information can be publicly released without impeding scientific advancement and policy making \citep{lane2014privacy}.  

 One type of big data that has been heavily researched in the statistics community over the last two decades is \textit{functional data}, with the corresponding branch of statistics called \textit{functional data analysis}, FDA.  FDA is concerned with conducting statistical inference on samples of functions, trajectories, surfaces, and other similar objects.  Such tools have become increasingly necessary as our data gathering technologies become more sophisticated.  FDA methods have proven very useful in a wide variety of fields including economics, finance, genetics, geoscience, anthropology, and kinesiology, to name only a few \citep{ramsay2002applied,ramsay:silverman:2005,ferraty:2011,horvath2012inference,kokoszka2017introduction}.  Indeed, nearly any data rich area of science will eventually come across applications that are amenable to FDA techniques. However, functional and other high dimensional data are also a rich source of potentially personally identifiable information 
 \citep{kulynych2002legal,erlich2014routes,schadt2012bayesian}.
 
{\bf Related Work:} To date, there has been very little work concerning FDA and statistical data privacy, in either \textit{Statistical Disclosure Limitation}, SDL or \textit{Differential Privacy}, DP. SDL is the branch of statistics concerned with limiting identifying information in released data and summaries while maintaining their utility for valid statistical inference, and has a rich history for both methodological developments and applications for ``safe" release of altered (or masked) microdata and tabular data \citep{Dalenius77,Rubin93,WillenborgD96,fienberg10, HundepoolD12}. DP has emerged from theoretical computer science with a goal of designing privacy mechanisms with mathematically provable disclosure risk \citep{dwork:2006,Dwork2006:Sensitivity}.  
 \citet{hall:rinaldo:wasserman:2013} provide the most substantial contribution to statistical privacy with FDA to date, working within the DP framework and the Gaussian mechanism for releasing a finite number of point-wise evaluations, with applications to kernel density estimation and support vector machines.  They provide a limiting argument that establishes DP for certain sets of functions.  One of the major findings of \citet{hall:rinaldo:wasserman:2013} is the connection between DP and Reproducing Kernel Hilbert Spaces, which we extend more broadly to {\it Cameron-Martin Spaces}.  Recently, \citet{alda2017bernstein} extended the work of \citet{hall:rinaldo:wasserman:2013} by considering a Laplace (instead of a Gaussian) mechanism and focused on releasing an approximation based on Bernstein polynomials, exploiting their close connection to point-wise evaluation on a grid or mesh.  
 Other related contributions include \citet{alvim2018metric} who consider privacy over abstract metric spaces assuming one has a sanitized dataset, and \citet{smith2018differentially} who examine how to best tailor the mechanism from \citet{hall:rinaldo:wasserman:2013}. 

 {\bf Our Contribution:} In this work, we move beyond \citet{hall:rinaldo:wasserman:2013,alda2017bernstein} and \citet{smith2018differentially} by establishing DP for functional data much more broadly.  We first show that the Gaussian mechanism achieves DP for a large class of linear functionals and then show that this mechanism offers seemingly complete protection against any summary imaginable, covering any ``path level" summaries, nonlinear transformations, or even a full function release, though the later is usually not computationally feasible without some additional structure (e.g., continuous time Markov chains).  Such extensions are critical for working with transformations that are not simple point wise evaluations, such as basis expansions, norms, and derivatives or when the objects exhibit complex nonlinear dynamics.  We also provide an interesting negative result, that shows that not all Gaussian noises are capable of achieving DP for a particular summary, regardless of how the noise is scaled.  In particular, we introduce a concept called \textit{compatibility}, and show that if a noise is not compatible with a particular summary, then it is impossible to achieve DP.
 To establish the necessary probabilistic bounds for DP we utilize functional densities via the {\it Cameron-Martin Theorem}.  This is also of independent interest in FDA as densities for functional data are rarely utilized due to the lack of a natural base measure \citep{berrendero2017use}.  Most attempts at utilizing or defining densities for functional data involve some work-around to avoid working in infinite dimensions \citep{delaigle:hall:2010,dai2017optimal}.  Lastly, we demonstrate these tools by considering mean function estimation via penalized smoothing, where in addition to the DP results we also provide guarantees on the utility of the sanitized estimate.
 
One of the major findings of this work is the interesting connection between \textit{regularization} and privacy. In particular, we show that by slightly over smoothing, one can achieve DP with substantially less noise, thus better preserving the utility of the release.  This is driven by the fact that a great deal of personal information can reside in the ``higher frequencies'' of a functional parameter estimate, while the ``lower frequencies'' are typically shared across subjects.  To more fully illustrate this point, we demonstrate how a cross-validation for choosing smoothing parameters can be dramatically improved when the cross-validation incorporates the function to be released.  
 Previous works concerning DP and regularization have primarily focused on performing shrinkage regression in a DP manner \citep[e.g.][]{kifer12perm,Chaudhuri2011:DPERM} and model selection with linear regression (e.g., \citet{lei2018DPmodelselect}), not exploiting the regularization to recover some utility as we propose here. 
 
 {\bf Organization:} The remainder of the paper is organized as follows.  In Section 2 we present the necessary background material for DP and FDA.  In Section 3 we present our primary results concerning releasing a finite number of linear functionals followed by full function and nonlinear releases.  
 In Section 4 we present an application to penalized smoothing for mean estimation, which is especially amenable to our privacy mechanism.  
 In Section 5 we present simulations to highlight the role of different parameters, while in Section 6 we present an application of Diffusion Tensor Imaging of Multiple Sclerosis patients. 
 In Section 7 we discuss our results and present concluding remarks.  


 \section{Background}
 \subsection{Differential Privacy}
 {\it Differential Privacy}, DP, was first introduced in \citet{dwork:2006,Dwork2006:Sensitivity}. 
 Let $\mbD$ be a (potentially infinite) population of records, and denote by $\mcD$ 
 the collection of all $n$-dimensional subsets of $\mbD$.
 Throughout we let $D$ and $D^\prime$ denote elements of $ \mcD$.  Notationally, we omit the dependence on $n$ for ease of exposition.  
We work with  $(\epsilon,\delta)$-DP, where $\epsilon$ and $\delta$ could be considered as parameters representing the {\it privacy budget} with smaller values indicating stronger privacy; when $\delta=0$ we obtain pure- or $\epsilon$-DP. DP is a property of the privacy mechanism applied to the data summary, in this case $f_D:=f(D)$, prior to release.  For simplicity, we will denote the application of this mechanism using a tilde; so $\tilde f_D:= \tilde f(D)$ is the {\it sanitized} version of $f_D$.  Probabilistically, we view $\tilde f_D$ as a random variable indexed by $D$ (which is not treated as random).
 This criteria can be defined for any probability space.
 \begin{definition}[\citet{dwork:2006,wasserman:zhou:jasa09}] \label{d:dp}
 	Let $f:\mcD \to \Omega$, where $( \Omega, \mcF)$ is some measurable space.  Let $\tilde f_D$ be random variables, indexed by $D$, taking values in $\Omega$ and representing the privacy mechanism.
 	The privacy mechanism is said to achieve $(\epsilon, \delta)-$DP if for any two datasets, $D$ and $D'$, which differ in only one record, we have
 	\[
 	P(\tilde f_D \in A) \leq P(\tilde f_{D'} \in A) e^\epsilon + \delta,
 	\]
 	for any measurable set $A \in \mcF$.
 \end{definition}
 The most common setting is when $\Omega = \mbR$, i.e., the summary is real valued.  In Section \ref{s:finite} we take $\Omega=\mbR^N$, corresponding to releasing $N$ linear functionals of a functional object, while in \ref{s:function} we will consider the case when $\Omega = \mbB$, a real separable Banach space.  In \citet{hall:rinaldo:wasserman:2013}, they were able to consider the space of real valued functions over $\mbR^d$, i.e., the product space $\Omega=\mbR^T$ with $T = \mbR^d$ (or some compact subset), by clever limiting arguments of cylindrical sets; they thus considered DP over $\mbR^T$ equipped with the cylindrical $\sigma$-algebra (i.e. the smallest $\sigma$-algebra that makes point-wise evaluations measurable). However, in most cases we are actually interested in a subspace of $\mbR^T$, such as the space of continuous functions, square integrable functions, differentiable functions, etc.  It turns out that the resulting $\sigma$-algebras (and thus the protection offered by DP) are in general quite different, and that working directly with $\mbR^T$ can result is some fairly glaring holes.  Chapter 7 of \citet{billingsley2008probability} or Section 3.1 of \citet{bogachev1998gaussian} discuss these issues, but it is interesting to note that the cylindrical $\sigma$-algebra on $\mbR^T$ is missing the sets of linear functions, polynomials, constants, nondecreasing functions, functions of bounded variation, differentiable functions, analytic functions, continuous functions, functions continuous at a given point, and Borel measurable functions.  To avoid this issue, we work directly with the Borel $\sigma$-algebra on the function space of interest, which in our case will always be a Banach space, though in principle this approach can be extended to handle any locally convex vector space. 
 
 %
 
 At a high level, achieving  $(\epsilon,\delta)-$DP means that the object to be released changes relatively little if the sample on which it is based is perturbed slightly. This change is related to what \citet{dwork:2006} called {\it sensitivity}.  Another nice feature is that if $\tilde f_D$ achieves DP, then so does any measurable transformation of it; see \citet{Dwork:2006b,Dwork2006:Sensitivity} for the original results,  \citet{wasserman:zhou:jasa09} for its statistical framework, and \citet{Dwork2014:AFD} for a more recent detailed review of relevant DP results. 

 \subsection{Functional Data Analysis}
 Much of FDA is built upon the {\it Hilbert space} approach to modeling, i.e., viewing data and/or parameters as elements of a particular Hilbert space (most commonly $L^2[0,1]$ after possibly rescaling). However, we take a more general approach by allowing for arbitrary separable Banach spaces, which will dramatically increase the application of our results, while requiring only a small amount of more technical work.  All of the concepts/tools from this section are classic probability theory results that might be of interest in the FDA and privacy communities.  We refer the interested reader to \citet{bogachev1998gaussian} for a nearly definitive treatment of Gaussian measures.  Throughout we let $\mbB$ denote a real separable Banach space; we always implicitly assume that $\mbB$ is equipped with its Borel $\sigma$-algebra. 
 
 Let $\theta: \mcD \to \mbB$ denote the particular summary of interest and for notational ease, we define $\theta_D := \theta(D)$.  In Section \ref{s:finite} we consider the case where the aim is to release a finite number of linear functionals of $\theta_D$, whereas in Section \ref{s:function} we consider releasing sanitized versions of the entire function or some nonlinear transformation of it (such as a norm or basis expansion).

 The backbone of our privacy mechanism is the same as in \citet{hall:rinaldo:wasserman:2013}, and is used extensively across the DP literature.  In particular, we add Gaussian noise to the summary and show how the noise can be calibrated to achieve DP.
 A random process $X \in \mbB$ is called {\it Gaussian} if $f(X)$ is Gaussian in $\mbR$, for any continuous linear functional $f \in \mbB^*$ \citep[Def. 2.2.1]{bogachev1998gaussian}.  Throughout we use $*$ to denote the corresponding topological dual space.  Equipped with the norm $\|f\|_{\mbB^*} = \sup_{\|h\|_{\mbB} \leq 1} f(h)$, the dual space is also a separable Banach space.    
 The pair $(\mbB,\nu)$ is often called an {\it abstract Weiner space} \citep[Sec. 3.9]{bogachev1998gaussian}, where $\nu$ is the probability measure over $\mbB$ induced by $X$.  Every Gaussian process is uniquely parametrized by a mean, $\mu \in \mbB$, and a covariance operator $C : \mbB^* \to \mbB$, which for every $f \in B^*$ satisfies
 \[
 \E[f(X)] = f(\mu), \qquad C(f) = \E[f(X-\mu) (X-\mu)]
 \]
  \cite{laha:rohatgi:1979}.  One can equivalently identify $C$ as a bilinear form $C(f,g) = \Cov(f(X),g(X))$, and we will use both notations whenever convenient.  
It follows that  
 \[
 f(X) \sim \mcN(f(\mu), C(f,f)),
 \]
 for any $f \in \mbB^*$.  We use the short hand notation  $\mcN$ to denote the Gaussian distribution over $\mbR$, but include subscripts for any other space, e.g.,\ $\mcN_{\mbB}$ for  $\mbB$.

A key object concerning privacy will be the {\it Cameron-Martin space} \citep[Sec. 2.4]{bogachev1998gaussian} of $X$ (or equivalently of $(\mbB,\nu)$).  Using $C$ one can equip $\mbB^*$ with an inner product
\begin{align*}
 \langle f, g \rangle_{\mcK} :=&  \Cov(f(X), g(X))  \\
=& \int f(x - \mu) g(x-\mu) \ d \nu(x),
\end{align*}
which implies that $\mbB^*$ can be identified as a linear subspace of $L^2(\mbB ,\nu)$, that is, $\nu$-square integrable functions over $\mbB$.  
However, $\mbB^*$ is no longer complete under this inner product; denote the completed space as $\mcK$.  
 Finally, consider the set of all $h \in \mcH \subset \mbB$ such that the mapping
\[
f \to f(h) 
\]  
is continuous in the $\mcK$ topology. Intuitively, these functions are ones that are "nicer" than arbitrary elements of $\mbB$.  In particular, they must be regular enough to ensure that $f(h)$ is finite for any $f \in \mcK$, which are much "uglier" functionals than those in $\mbB^*$.  By the Riesz representation theorem, we can associate each element $h \in \mcH$ with a $T_h \in \mcK$ such that $\langle T_h, f \rangle_\mcK = f(h)$.  The set $\mcH$ equipped with the inner product
\[
\langle x, y \rangle_{\mcH} = \langle T_x, T_y \rangle_{\mcK}, 
\]
is called the {\it Cameron-Martin Space}, and is itself a separable Hilbert space.  Note that, slightly less abstractly, we have $C(T_h) = h$ \citep[Lemma 2.4.1]{bogachev1998gaussian}.  One can also view $\mcK$ as being a type of {\it Reproducing Kernel Hilbert Space} \citep[pg. 44]{bogachev1998gaussian} in a very broad sense since we have $\langle T_h, f\rangle_{\mcK} = f(h)$, for any $f \in \mcK$.  In infinite dimensions the Cameron-Martin space does not contain the sample paths of $X$, but they can be thought of as "living at the boundary" of $\mcH$.  While the Cameron-Martin space is introduced via Gaussian processes, it is determined entirely by the covariance operator $C$.  

\subsection{Hilbert Space Example}
While working with a general Banach space allows for a broader impact, it is also conceptually much more challenging.  We can gain additional insight by considering what happens when $\mbB = \mbH$ is actually a Hilbert space.  By the Riesz Representation Theorem, $\mbH$ is isomorphic to $\mbH^*$ so we can always identify $\mbH^*$ with $\mbH$ and de-emphasize the linear functionals.

We can obtain very convenient expressions if we take a basis $\{v_i: i=1,2,\dots\}$ of $\mbH$ consisting of the eigenfunctions of $C$ (recall in Hilbert spaces $C$ must be nonnegative definite and trace class).  In this case we have that
\[
C(v_i) = \lambda_i v_i. 
\]
So assuming that that there are no zero eigenvalues, define $e_i = \lambda_i^{-1/2} v_i$, then these form an orthonormal basis of $\mcK$ as
\[
\langle e_i, e_j \rangle_\mcK
= \lambda_i^{-1/2} \lambda_j^{-1/2} \Cov(\langle v_i,X \rangle_{\mbH}, \langle v_j, X\rangle_\mbH) = \delta_{ij},
\]
where $\delta_{ij}$ is 1 if $i=j$ and zero otherwise.  The space $\mcK$ consists of all linear combinations of the $e_i$ whose coefficients are square summable.  The inner product on Cameron-Martin space, $\mcH$, is given by
\[
\langle x ,y \rangle_{\mcH}
= \sum \frac{\langle x, v_i \rangle_{\mbH} \langle y, v_i \rangle_{\mbH}}{\lambda_i},
\] 
so that
\begin{align}
\mcH := \left\{ x \in \mbH : \sum_{i=1}^\infty \frac{\langle x, v_i \rangle^2_{\mbH}}{\lambda_i} < \infty \right\}. \label{e:K}
\end{align}
In other words, those elements of $\mcH$ are the functions whose coefficients in the $v_i$ basis decrease sufficiently quickly.  Note that the case where some $\lambda_i$ are actually zero (meaning $C$ has a nontrivial null space) can be easily handled by restricting $\mcH$ to the range of $C$.\footnote{In fact, such a game can be played quite broadly as any Radon measure over a {\it Fr\'echet space} will concentrate on a reflexive separable Banach space \citep[Thm 3.6.5]{bogachev1998gaussian}.}

 The space $\mcH$ is a Hilbert space when equipped with the inner product $\langle x, y \rangle_{\mcH} = \sum \lambda_i^{-1} \langle x, v_i \rangle \langle y, v_i \rangle$.  When $\mbH = L^2[0,1]$ and $K$ is an integral operator with continuous kernel $k(t,s)$, then $\mcH$ is isomorphic to a Reproducing Kernel Hilbert Space, RKHS \citep{berlinet:thomas:2011} (one has to be slightly careful as $L^2$ consists of equivalence classes).

 \section{Privacy Enhanced Functional Data}
 In this section we present our main results.
 The mechanism we use for guaranteeing differential privacy is to add a Gaussian noise before releasing $\theta_D$.  In other words, our releases will be based on a private version $\tilde \theta_D$, where  $\tilde \theta_D = \theta	_D + \sigma Z$.  However, it turns out that not just any Gaussian noise, $Z$, can be used. In particular, the options for choosing $Z$ depend heavily on the summary $\theta_D$.  This is made explicit in the following definition.
 
 \begin{definition} \label{d:compatible}
 	We say that the summary $\theta_D$ is compatible with a Gaussian noise, $Z \sim \mcN_\mbB(0,C)$, if $\theta_D$ resides in the Cameron-Martin space of $Z$ for any $D \in \mcD$.
 \end{definition}
 
 Intuitively, this means that the noise must be ``rougher" than the summaries.  Our next definition is a generalization of one from  \citet{hall:rinaldo:wasserman:2013}, which focused on functions in RKHS only.
 \begin{definition}\label{d:gs}
 	The global sensitivity of a summary $\theta_D$, with respect to a Gaussian noise $Z\sim\mcN_\mbB(0,C)$ is given by
 	\[
 	\Delta^2 = \sup_{D^\prime \sim D} \| \theta_D - \theta_{D'}\|_{\mcH}^2,
 	\]
 	where $D^\prime \sim D$ means the two sets differ at one record only, and $\| \cdot \|_\mcH$ is the norm on the  Cameron-Martin space of $Z$.
 \end{definition}
 
 The {\em global sensitivity (GS)} is a central quantity in the theory of DP.  In particular, the amount of noise, $\sigma Z$, depends directly on the global sensitivity. There are other notions of ``sensitivity'' in the DP literature, such as local sensitivity, but here our focus is on the global sensitivity that typically leads to the {\it worst case} definition of risk under DP; for a detailed review of DP theory and concepts, see \citet{Dwork2014:AFD}. If a summary is not compatible with a noise, then it is possible to make the global sensitivity infinite, in which case no finite amount of noise would be able to preserve privacy in the sense of satisfying DP.
 %
 %
 
 \begin{theorem}\label{t:comp}
 	If a summary $\theta_D \in \mbB$ is not compatible with a noise $Z\sim N_\mbB(0,C)$ then for any $\sigma > 0$, $\tilde \theta_D:= \theta_D + \sigma Z$ {\bf will not} satisfy DP.
 \end{theorem}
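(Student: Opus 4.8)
The plan is to exploit the Feldman--H\'ajek dichotomy for Gaussian measures, of which the Cameron--Martin theorem is one half: translating a Gaussian measure by a fixed vector produces either an \emph{equivalent} measure (when the shift lies in the Cameron--Martin space) or a \emph{mutually singular} one (when it does not). Since DP forces the laws of $\tilde\theta_D$ and $\tilde\theta_{D'}$ to be comparable for neighboring $D,D'$, a singular pair of laws will immediately break the privacy inequality, and this will hold for every $\sigma$.

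First I would record that the law of $\tilde\theta_D=\theta_D+\sigma Z$ is the translate by $\theta_D$ of $\mcN_\mbB(0,\sigma^2 C)$; call it $\mu_D$. A key observation is that the Cameron--Martin space of $\sigma Z$ coincides as a set with $\mcH$, the Cameron--Martin space of $Z$: scaling the covariance by $\sigma^2$ only rescales the $\mcH$ inner product, not the underlying subspace. Hence membership of a shift in $\mcH$ is insensitive to $\sigma$, which is exactly what delivers the ``regardless of how the noise is scaled'' part of the conclusion.

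Next, from non-compatibility I would manufacture a neighboring pair whose summary difference escapes $\mcH$. By hypothesis there is some $D_0$ with $\theta_{D_0}\notin\mcH$. Connecting it to a reference dataset by a finite chain of single-record swaps $D_0\sim D_1\sim\cdots\sim D_k$, the telescoping identity $\theta_{D_0}-\theta_{D_k}=\sum_{i=1}^k(\theta_{D_{i-1}}-\theta_{D_i})$ together with the fact that $\mcH$ is a linear subspace forces at least one increment $\theta_{D_{i-1}}-\theta_{D_i}$ to lie outside $\mcH$; otherwise the whole difference, and thus $\theta_{D_0}$ relative to an anchor in $\mcH$, would lie in $\mcH$. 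This is the step where I expect to spend the most care, as it is essentially the claim that incompatibility is equivalent to an infinite global sensitivity $\Delta$; it is clean provided the summary map attains some value in $\mcH$, and I would state that caveat explicitly.

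Finally I would close the argument. For the neighbors $D:=D_{i-1}$, $D':=D_i$ just produced, $\theta_D-\theta_{D'}\notin\mcH$, so by the singular half of the dichotomy $\mu_D\perp\mu_{D'}$; choose a measurable $A$ carrying all the mass of $\mu_D$ and none of $\mu_{D'}$, i.e.\ $\mu_D(A)=1$ and $\mu_{D'}(A)=0$. Substituting into Definition \ref{d:dp} gives $1\le e^\epsilon\cdot 0+\delta=\delta$, impossible for any admissible budget with $\delta<1$ and any $\epsilon>0$; hence no $\sigma>0$ can make $\tilde\theta_D$ satisfy DP. The conceptual heart of the proof, and its main obstacle, is justifying the singularity: this is the converse to the Cameron--Martin theorem, and I would invoke the zero--one law for Gaussian measures (Feldman--H\'ajek) rather than try to build the separating set $A$ by hand.
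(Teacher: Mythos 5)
Your proposal is correct and follows essentially the same route as the paper's own proof: both rest on the Cameron--Martin/Feldman--H\'ajek dichotomy to conclude that a translate by a vector outside $\mcH$ yields mutually singular laws, whence a set $A$ with $\mu_D(A)=1$, $\mu_{D'}(A)=0$ forces $1\le e^\epsilon\cdot 0+\delta$ and defeats DP for every $\sigma>0$. Your telescoping construction of a \emph{neighboring} pair whose increment escapes $\mcH$ makes explicit --- and correctly flags the needed ``some $\theta_D$ lies in $\mcH$'' caveat for --- a step the paper simply asserts when it claims incompatibility yields $D\sim D'$ with $\|\theta_D-\theta_{D'}\|_\mcH=\infty$.
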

 \begin{proof}
 	This is a direct consequence of the Cameron-Martin Theorem, which characterizes the equivalence/orthogonality of Gaussian measures.  If the summary is not compatible with the noise, then there exists a $D \sim D'$ such that $\| \theta_D - \theta_{D'}\|_\mcH = \infty$, which implies that the distributions $\tilde f(D)$ and $\tilde f(D')$ are singular. 
 \end{proof}
 
 Intuitively, if the summary is not compatible with the noise, then one can pool even small amounts of information from across an infinite number of dimensions to produce a disclosure. 
 
 \subsection{Releasing Finite Projections} \label{s:finite}
 We begin with the comparatively simpler task of releasing a finite vector of linear functionals of $\theta_D$.  In particular, we aim to release $f(\theta_D) = \{f_1(\theta),\dots,f_N(\theta_D)\}$, for $f_i \in \mcK \supset \mbB^*$ and some fixed $N$.  While placed in a more general context, the core concepts involved are the same as in \citet{hall:rinaldo:wasserman:2013} (they focused on point-wise evaluations, which are continuous linear functionals over an appropriate space).  Interestingly, since we are using the Cameron-Martin space, we can actually release more than just continuous linear functionals; we can release any functional from $\mcK$, which is, in general, much larger than $\mbB^*$.  
 \begin{theorem} \label{t:finite}
 	Assume $\theta_D$ is compatible with $Z \sim \mcN(0,C)$, $\epsilon \leq 1$, and define
 	\[
 	\tilde \theta_D = \theta_D + \sigma Z
 	\qquad \text{with} \qquad
 	\sigma^2 \geq  \frac{2 \log(2/\delta)}{\epsilon^2} \Delta^2.
 	\]  
 	Now define $f(\theta_D) = \{f_1(\theta),\dots,f_N(\theta_D)\}$ and $\tilde f(\theta_D) = \{f_1( \tilde \theta_D),\dots,f_N(\tilde \theta_D)\}$, for $\{f_i \in \mcK \}_{i=1}^{N}$.  Then $\tilde f_D$ achieves $(\epsilon,\delta)$-DP in $\mbR^N$. 
 \end{theorem}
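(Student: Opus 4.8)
The plan is to push the problem forward onto $\mbR^N$, where everything becomes an explicit finite-dimensional Gaussian computation, and then to isolate the single geometric inequality that makes the bound independent of $N$ and of the particular functionals chosen. First I would observe that, since each $f_i \in \mcK \subset L^2(\mbB,\nu)$, the variable $f_i(Z)$ is a centered Gaussian with $\Cov(f_i(Z),f_j(Z)) = \langle f_i,f_j\rangle_\mcK$; consequently the output vector $\tilde f(\theta_D) = (f_1(\tilde\theta_D),\dots,f_N(\tilde\theta_D))$ is Gaussian in $\mbR^N$ with mean $\mu_D = (f_1(\theta_D),\dots,f_N(\theta_D))$ and covariance $\Sigma = \sigma^2 G$, where $G_{ij} = \langle f_i,f_j\rangle_\mcK$. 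The two candidate releases indexed by $D$ and $D'$ are therefore two $N$-dimensional Gaussians sharing the common covariance $\Sigma$ and differing only in their means.

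For two such Gaussians the log-density ratio (the privacy loss), evaluated under the law of $\tilde f(\theta_D)$, is itself Gaussian with mean $\eta^2/2$ and variance $\eta^2$, where $\eta^2 = (\mu_D-\mu_{D'})^\top\Sigma^{-1}(\mu_D-\mu_{D'})$ is the squared Mahalanobis distance between the means. The heart of the argument is to bound $\eta^2$ by the global sensitivity. Writing $w = \theta_D - \theta_{D'}$, which lies in $\mcH$ by compatibility, and $v_i = f_i(w) = \langle T_w, f_i\rangle_\mcK$, the quantity $v^\top G^{-1} v$ is exactly the squared $\mcK$-norm of the orthogonal projection of $T_w$ onto $\Span\{f_1,\dots,f_N\}$; since a projection cannot increase norm, $v^\top G^{-1} v \le \|T_w\|_\mcK^2 = \|w\|_\mcH^2 \le \Delta^2$. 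Hence $\eta^2 = \sigma^{-2}\,v^\top G^{-1}v \le \Delta^2/\sigma^2$, and the prescribed $\sigma$ gives $\eta \le \epsilon/\sqrt{2\log(2/\delta)}$. This is where I expect the real work to sit: recognizing $v^\top G^{-1}v$ as a projection norm, and hence as dominated by the full Cameron-Martin norm \emph{regardless} of how many functionals are released or whether $G$ is singular (in the degenerate case one simply works on the range of $G$ with a pseudo-inverse and restricts the densities to the common support).

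With the mean separation controlled, the remainder is the classical one-dimensional tail estimate. For any measurable $A \subset \mbR^N$, splitting the integral of the density of $\tilde f(\theta_D)$ over $A$ on the event that the privacy loss exceeds $\epsilon$ gives $P(\tilde f(\theta_D) \in A) \le e^\epsilon P(\tilde f(\theta_{D'}) \in A) + P(L > \epsilon)$, where $L$ denotes the privacy loss under the law of $\tilde f(\theta_D)$. Since $L \sim \mcN(\eta^2/2,\eta^2)$, one has $P(L>\epsilon) = P(Z > \epsilon/\eta - \eta/2)$ for a standard normal $Z$; applying the bound $P(Z > t) \le \tfrac12 e^{-t^2/2}$ together with $\eta \le \epsilon/\sqrt{2\log(2/\delta)}$ and the hypothesis $\epsilon \le 1$ shows $P(L > \epsilon) \le \delta$. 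This yields $(\epsilon,\delta)$-DP in $\mbR^N$ and completes the proof.
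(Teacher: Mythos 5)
Your proposal is correct and follows essentially the same route as the paper: both reduce the problem to bounding the Mahalanobis distance $(\nu_D-\nu_{D'})^\top \bK^{+}(\nu_D-\nu_{D'})$ by the global sensitivity via a projection argument (you project $T_{\theta_D-\theta_{D'}}$ onto $\Span\{f_1,\dots,f_N\}$ in $\mcK$ and use that projections are contractive, while the paper works with the isometric projection operator $P$ in $\mcH$ and verifies idempotence and symmetry explicitly). The only presentational difference is that you inline the finite-dimensional Gaussian-mechanism privacy-loss and tail calculation, whereas the paper delegates that step to Proposition~3 of \citet{hall:rinaldo:wasserman:2013}.
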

 
 Theorem \ref{t:finite} can be viewed as an extension of \citet{hall:rinaldo:wasserman:2013} who focus on point-wise releases.  If $\mbB$ is taken to be the space of continuous functions with an appropriate topology, then Theorem \ref{t:finite} implies point-wise releases are protected as well.  However, this theorem allows the release of any functional in $\mcK$. This dramatically increases the release options and applications as compared to \citet{hall:rinaldo:wasserman:2013} or \citet{alda2017bernstein}.

 \subsection{Full Function and Nonlinear Releases} \label{s:function}
 While Section \ref{s:finite} covers a number of important cases, it does not cover all releases of potential interest.  In particular, full function releases are not protected and neither are nonlinear releases, such as norms or derivatives. \textcolor{black}{A full function release is not usually practically possible, however in some situations, such as continuous time Markov chains, full paths can be completely summarized using a finite number of values, but these values are not simple point wise evaluations or linear projections and thus not covered under \citet{hall:2006,alda2017bernstein} or our results from Section \ref{s:finite}.}
 Regardless of this point, there is a certain comfort in knowing that one has a complete protection that holds regardless of whatever special structures one might be able to exploit or new computational tools that might become available.  In addition, one can obtain a great deal of insight by considering the infinite dimensional problem, as it highlights the fundamental role smoothing plays when trying to maintain utility while achieving DP.
  
 To guarantee privacy for these types of releases, we need to establish $(\epsilon,\delta)$-DP for the entire function, not just finite projections.  This means that in Definition \ref{d:dp}, the space is taken to be $\mbB$, which is infinite dimensional. 
 In previous works \citep[e.g.,][]{dwork2014algorithmic,hall:rinaldo:wasserman:2013} to establish the probability inequalities like in Definition \ref{d:dp}, bounds based on multivariate normal densities were used.  This presents a serious problem for FDA and infinite dimensional spaces as it becomes difficult to work with such objects (there is very little FDA literature that does so).  For example,
 \citet{delaigle:hall:2010} define densities only for finite ``directions" of functional objects.  Another example is \citet{bongiorno:aldo:2015} who define psuedo-densities by carefully controlling ``small ball" probabilities.  Both works claim that for a functional object the density ``generally does not exist."  However, this turns out to be a \textit{technically} incorrect claim, while still often being true in spirit.  The correct statement is that, in general, it is difficult to define a \textit{useful} density for functional data.  In particular, to work with likelihood methods, a family of probability measures should all have a density with respect to the same base measure, which, at present, does not appear to be possible in general for functional data.

 \textcolor{black}{The difficulty in defining densities in infinite-dimensional spaces comes from the fact there is no common {\it base} or {\it reference} measure \citep{cuevas2014partial}, such as Lebesgue measure, however} our goal in using densities is more straightforward.
 \textcolor{black}{We require densities} (with respect to the same base measure) for the family of probability measures induced by $\{ \theta_D + \sigma Z: D \in \mcD\}$, where $Z$ is a mean zero Gaussian process in $\mbB$ with covariance operator $C$.  It turns out that this is in fact possible because we are adding the exact same type of noise to each element.  We give the following lemma, which is simply an application of the classic Cameron-Martin Theorem.
 \begin{lemma}\label{t:density}
 	Assume that the summary $\theta_D$ is compatible with a noise $Z$.  Denote by $Q$ the probability measure over $\mbB$ induced by $ \sigma Z$, and by $\{P_D: D \in \mcD\}$ the family of probability measures over $\mbB$ induced by $\theta_D + \sigma Z$.  Then every measure $P_D$ has a density over $\mbB$ with respect to $Q$, which is given by
 	\[
 	\frac{dP_D}{dQ} (x) =
 	\exp\left\{
 	- \frac{1}{2\sigma^2}\left(
 	\| \theta_D\|_\mcH^2 - 2 T_{\theta_D}(x) 
 	\right)
 	\right\},
 	\]
 	$Q$ almost everywhere.  Recall that $\theta_D = C(T_{\theta_D})$ and that the density is unique up to a set of $Q$ measure zero.
 \end{lemma}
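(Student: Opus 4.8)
The plan is to recognize that this lemma is essentially a restatement of the classical Cameron--Martin theorem, with the only real work being the correct bookkeeping of the scaling by $\sigma$. First I would identify the two measures explicitly. Since $Q$ is induced by $\sigma Z$ with $Z \sim \mcN_\mbB(0,C)$, we have $Q = \mcN_\mbB(0, \sigma^2 C)$, a centered Gaussian measure. The family member $P_D$ is induced by $\theta_D + \sigma Z$, i.e.\ $P_D$ is the translate $P_D(A) = Q(A - \theta_D)$ of $Q$ by the fixed (nonrandom) shift $\theta_D$. Thus computing $dP_D/dQ$ is exactly the problem of differentiating a shifted Gaussian measure against the original, which is the content of the Cameron--Martin theorem (see \citealp{bogachev1998gaussian}).

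Next I would verify the hypothesis of that theorem. The Cameron--Martin theorem asserts that $Q$ and its shift by $h$ are mutually absolutely continuous precisely when $h$ lies in the Cameron--Martin space of $Q$, and are mutually singular otherwise (this dichotomy is exactly what powers Theorem \ref{t:comp}). Here I would note the one subtlety that deserves a sentence: rescaling the covariance from $C$ to $\sigma^2 C$ leaves the Cameron--Martin space unchanged as a set and only rescales its norm (in the Hilbert picture of \eqref{e:K}, replacing $\lambda_i$ by $\sigma^2\lambda_i$ does not change whether $\sum \langle x, v_i\rangle_\mbH^2/\lambda_i$ converges). Consequently the compatibility assumption of Definition \ref{d:compatible}, which is phrased in terms of $Z$, is equivalent to $\theta_D$ belonging to the Cameron--Martin space of $Q$ for every $D \in \mcD$, so the required density exists.

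With the hypothesis in hand, I would apply the theorem to obtain
\[
\frac{dP_D}{dQ}(x) = \exp\Bigl\{ \widehat{\theta_D}(x) - \tfrac{1}{2}\|\theta_D\|_{\mcH(Q)}^2 \Bigr\},
\]
where $\|\cdot\|_{\mcH(Q)}$ is the Cameron--Martin norm of $Q$ and $\widehat{\theta_D}$ is the $Q$-measurable linear functional representing the shift. The final step is to convert these $Q$-scaled objects back to the $C$-scaled quantities appearing in the statement. The norm rescales as $\|\theta_D\|_{\mcH(Q)}^2 = \sigma^{-2}\|\theta_D\|_\mcH^2$. For the functional, I would use $\theta_D = C(T_{\theta_D}) = \sigma^2 C(\sigma^{-2} T_{\theta_D})$, so that the element representing $\theta_D$ under covariance $\sigma^2 C$ is $\sigma^{-2} T_{\theta_D}$, giving $\widehat{\theta_D}(x) = \sigma^{-2} T_{\theta_D}(x)$. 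Substituting both into the display collapses the exponent to $-\tfrac{1}{2\sigma^2}\bigl(\|\theta_D\|_\mcH^2 - 2 T_{\theta_D}(x)\bigr)$, which is the claimed formula; uniqueness up to $Q$-null sets is the standard (non)uniqueness of Radon--Nikodym derivatives.

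I expect the main obstacle to be purely this $\sigma$-bookkeeping: obtaining a single factor $1/\sigma^2$ in front of the exponent requires simultaneously rescaling the Cameron--Martin norm and the measurable linear functional, and it is easy to misplace a power of $\sigma$. A secondary point of care is interpreting $T_{\theta_D}(x)$ correctly: $T_{\theta_D}$ lives in $\mcK$ and is in general only a $\nu$-measurable linear functional rather than an element of $\mbB^*$, so $\widehat{\theta_D}(x)$ must be read as this measurable extension, which is defined only $Q$-almost everywhere --- matching the ``$Q$ almost everywhere'' qualifier in the statement.
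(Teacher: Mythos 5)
Your proposal is correct and matches the paper's approach: the paper offers no separate proof of this lemma, stating only that it ``is simply an application of the classic Cameron--Martin Theorem,'' and your argument is precisely that application with the $\sigma$-rescaling of the Cameron--Martin norm and of the representer $T_{\theta_D}$ carried out correctly. The two subtleties you flag --- that scaling the covariance by $\sigma^2$ leaves the Cameron--Martin space unchanged as a set (so compatibility with $Z$ gives compatibility with $\sigma Z$), and that $T_{\theta_D}(x)$ must be read as a $Q$-measurable linear extension defined only almost everywhere --- are exactly the points the paper leaves implicit.
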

 \textcolor{black}{ At this point we stress that the noise is chosen by the user; it is not a property of the data.  The primary hurdle for the user is ensuring that the summary is compatible with the selected noise.  As we will see in Section \ref{s:g.s.bounds}, one can accomplish this by using specific estimators.}
 Lemma \ref{t:density} implies that, for any Borel measurable set $A \subset \mbB$ we have
 \[
 P_D(A) = \int_A \frac{dP_D}{dQ} (x) \ dQ(x),
 \]
 which we exploit in our proofs later on.  
 
 Now that we have a well defined density we can establish differential privacy for entire functions.
 \begin{theorem} \label{t:func}
 	Let $f_D: = \theta_D$, and assume that it is compatible with a noise $Z$ and that $\epsilon  \leq 1$, then $\tilde f_D := f_D + \sigma Z$ achieves $(\epsilon,\delta)$-DP over $\mbB$ (with the Borel $\sigma$-algebra), where $\sigma$ is defined in Theorem \ref{t:finite}.
 \end{theorem}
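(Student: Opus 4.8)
The plan is to bypass the infinite-dimensional geometry entirely by working with the Radon--Nikodym density supplied by Lemma \ref{t:density}, which collapses the problem to a one-dimensional Gaussian tail estimate — exactly the calculation underlying the classical Gaussian mechanism, now carried out with the $\mcH$-norm in place of Euclidean distance. Fix $D \sim D'$ and write $h := \theta_D - \theta_{D'}$; by compatibility both $P_D$ and $P_{D'}$ are equivalent to $Q$, so the likelihood ratio $dP_D/dP_{D'}$ is well defined $Q$-a.e.\ and equals the ratio of the two densities from Lemma \ref{t:density}. Define the privacy loss $L(x) := \log \frac{dP_D}{dP_{D'}}(x)$. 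The first step is the standard decomposition: for any Borel $A \subset \mbB$,
\[
P_D(A) = \int_A e^{L(x)} \, dP_{D'}(x) \leq e^\epsilon P_{D'}(A) + P_D(L > \epsilon),
\]
obtained by splitting the integral over $\{L \leq \epsilon\}$ and $\{L > \epsilon\}$ and using $\int_{A \cap \{L>\epsilon\}} e^L \, dP_{D'} = P_D(A \cap \{L > \epsilon\})$. It therefore suffices to show $P_D(L > \epsilon) \leq \delta$.

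The second step is to identify the law of $L$ under $P_D$. Using linearity of $h \mapsto T_h$, the densities give
\[
L(x) = -\frac{1}{2\sigma^2}\left( \|\theta_D\|_\mcH^2 - \|\theta_{D'}\|_\mcH^2 \right) + \frac{1}{\sigma^2} T_h(x),
\]
so $L$ is an affine function of the single $\mcK$-functional $T_h$. Under $P_D$ we have $x \stackrel{d}{=} \theta_D + \sigma Z$, hence $T_h(x) = \langle h, \theta_D \rangle_\mcH + \sigma\, T_h(Z)$, where $T_h(Z)$ is centered Gaussian with variance $\langle T_h, T_h \rangle_\mcK = \|h\|_\mcH^2$. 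A short computation then yields
\[
L \sim \mcN\left( \frac{\|h\|_\mcH^2}{2\sigma^2}, \; \frac{\|h\|_\mcH^2}{\sigma^2} \right) \quad \text{under } P_D,
\]
the mean simplifying because $\langle h, \theta_D \rangle_\mcH - \tfrac12(\|\theta_D\|_\mcH^2 - \|\theta_{D'}\|_\mcH^2) = \tfrac12\|h\|_\mcH^2$.

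The final step is the tail bound. Writing $\eta^2 := \|h\|_\mcH^2/\sigma^2$ and standardizing gives $P_D(L > \epsilon) = \bar\Phi(\epsilon/\eta - \eta/2)$, where $\bar\Phi$ is the standard normal survival function; since this is increasing in $\eta$ and $\|h\|_\mcH \leq \Delta$ by the definition of global sensitivity, it suffices to bound it at $\eta^2 = \Delta^2/\sigma^2$. Inserting $\sigma^2 \geq 2\log(2/\delta)\,\Delta^2/\epsilon^2$ gives $\epsilon/\eta \geq \sqrt{2\log(2/\delta)}$, and applying the Gaussian tail bound $\bar\Phi(t) \leq e^{-t^2/2}$ together with $\epsilon \leq 1$ drives $P_D(L > \epsilon)$ below $\delta$ (in fact below $\tfrac{\sqrt e}{2}\delta$), completing the argument.

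I anticipate the main obstacle to be conceptual rather than computational: justifying that the seemingly infinite-dimensional privacy loss genuinely reduces to the one-dimensional Gaussian $T_h(Z)$. This rests on (i) mutual equivalence of $P_D$ and $P_{D'}$, which compatibility and Lemma \ref{t:density} guarantee and which Theorem \ref{t:comp} shows is indispensable, and (ii) the fact that $T_h \in \mcK$ acts as a genuine (a.e.-defined) Gaussian linear functional with $\mcK$-norm exactly $\|h\|_\mcH$, so that both the shift $\langle h,\theta_D\rangle_\mcH$ and the noise variance are finite — precisely what would fail in the incompatible case. The monotonicity of the tail in $\eta$, which lets the supremum defining $\Delta$ capture the worst case, should be recorded but is routine.
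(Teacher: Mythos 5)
Your proposal is correct and follows essentially the same route as the paper's proof: both use the Cameron--Martin density from Lemma \ref{t:density} to form the likelihood ratio $dP_D/dP_{D'}$, split $\mbB$ according to whether the privacy loss exceeds $\epsilon$, and reduce the residual term to a one-dimensional Gaussian tail bound via the functional $T_{\theta_D - \theta_{D'}}$. Your version merely makes two steps more explicit than the paper does — the identification of the full law $L \sim \mcN(\|h\|_\mcH^2/2\sigma^2,\ \|h\|_\mcH^2/\sigma^2)$ under $P_D$ and the monotonicity in $\|h\|_\mcH$ that justifies replacing it by $\Delta$ — and carries out the final tail inequality directly rather than citing \citet{hall:rinaldo:wasserman:2013}.
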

 We also have the following simple corollary.
 \begin{corol}\label{c:dp}
 	Let $f$ be compatible with a noise $Z$, and let $T$ be any measurable transformation from $\mbB \to \mcS$, where $\mcS$ is a measurable space. Then $T(f_D + \sigma Z)$ achieves $(\epsilon,\delta)$-DP over $\mcS$, where $\sigma$ is defined in Theorem \ref{t:finite}.
 \end{corol}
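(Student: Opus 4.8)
The plan is to reduce this corollary to Theorem \ref{t:func} via the post-processing property of differential privacy, which was already flagged in the background discussion (``if $\tilde f_D$ achieves DP, then so does any measurable transformation of it''). Since $f$ is compatible with $Z$, Theorem \ref{t:func} already guarantees that $\tilde f_D := f_D + \sigma Z$ achieves $(\epsilon,\delta)$-DP over $\mbB$ equipped with its Borel $\sigma$-algebra, for the stated choice of $\sigma$. The remaining task is simply to transport this guarantee through the measurable map $T$ onto $\mcS$.

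First I would fix an arbitrary pair of neighboring datasets $D \sim D'$ and an arbitrary measurable set $A \subset \mcS$. The key observation is that the event $\{T(\tilde f_D) \in A\}$ coincides with $\{\tilde f_D \in T^{-1}(A)\}$, and since $T$ is measurable, $B := T^{-1}(A)$ is a Borel measurable subset of $\mbB$. Hence the law of $T(\tilde f_D)$ on $\mcS$ is precisely the pushforward under $T$ of the law of $\tilde f_D$, evaluated at $A$, and the DP inequality for the $\mcS$-valued release on $A$ is literally identical to the DP inequality for the $\mbB$-valued release on $B$.

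Next I would invoke Theorem \ref{t:func} with the Borel set $B$, which yields $P(\tilde f_D \in B) \leq e^\epsilon P(\tilde f_{D'} \in B) + \delta$. Rewriting both sides in terms of $T$ gives $P(T(\tilde f_D) \in A) \leq e^\epsilon P(T(\tilde f_{D'}) \in A) + \delta$, which is exactly $(\epsilon,\delta)$-DP over $\mcS$ in the sense of Definition \ref{d:dp}. Since the neighboring pair $D \sim D'$ and the set $A$ were arbitrary, the claim follows.

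There is essentially no analytic obstacle here; the entire content is measurability bookkeeping. The only point requiring care is confirming that $T^{-1}(A)$ is genuinely a Borel set of $\mbB$, so that Theorem \ref{t:func} applies verbatim, and this is guaranteed by the hypothesis that $T$ is a measurable map into the measurable space $\mcS$. Note in particular that no recalibration of $\sigma$ is needed: the noise $Z$ and the global sensitivity $\Delta$ are properties of the $\mbB$-valued summary $f_D$ and are untouched by post-processing, which is precisely why the same $\sigma$ from Theorem \ref{t:finite} suffices.
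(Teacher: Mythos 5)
Your argument is correct and is exactly the standard post-processing reduction the paper itself relies on (it leaves the corollary unproved, citing the well-known fact that any measurable transformation of a DP release remains DP). The identification $\{T(\tilde f_D)\in A\}=\{\tilde f_D\in T^{-1}(A)\}$ with $T^{-1}(A)$ Borel in $\mbB$, followed by an application of Theorem \ref{t:func}, is precisely the intended proof, and your remark that $\sigma$ needs no recalibration is accurate.
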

 Together, Theorem \ref{t:func} and Corollary \ref{c:dp} imply that the Gaussian mechanism gives very broad privacy protection for functional data and other infinite dimensional objects, as nearly any transformation or manipulation of the privacy enhanced release is guaranteed to maintain DP; this is known as a {\em postporocessing} property (e.g., see \citet{Dwork2014:AFD}).
 %

 \section{Privacy for Mean Function Estimation} \label{s:g.s.bounds}
 In this section we consider the problem of estimating a mean function $\mu$ from a sample $X_1,\dots,X_n$ that are iid elements of $\mbH$ with $\E X_i = \mu \in \mbH$ and $\|X_i\|_\mbH \leq \tau < \infty$ for all $i$. We derive a bound on the global sensitivity as well as utility guarantees. In Section \ref{s:es} and in the Supplemental 
 we will illustrate how to produce private releases of mean function estimates based on RKHS smoothing in more specific settings. In \citet{hall:rinaldo:wasserman:2013} one can also find examples for kernel density estimation and support vector machines.  
 
 As is usual in the DP literature, we assume that the data is standardized so that it is bounded, usually with $\tau =1$.  In this case, the sample mean $\hat \mu = n^{-1} \sum_{i=1}^n X_i$ is root-$n$ consistent and asymptotically normal \citep{kokoszka2017introduction}.
There are a multitude of methods for estimating smooth functions, however, a penalized approach is especially amenable to our privacy mechanism.
 In this case we define a penalty using the covariance of the noise, $C$.  However, the penalty and noise kernels need not be exactly the same, and in particular, we assume that penalty uses $ C^{\eta}$ for some $\eta \geq 1$.  Here $C^\eta$ has the same eigenfunctions as $C$, but the eigenvalues have been raised the power $\eta$.    This allows for greater flexibility in terms of smoothing and it is helpful for deriving utility guarantees. We define the penalized estimate of the mean $\mu$
 \[
 \hat \mu = \argmin_{m \in \mcH} 
 \frac{1}{N} \sum_{i=1}^N \| X_i - m\|_{\mbH}^2 +\phi \| m \|_{\eta}^2,
 \]
 where $\phi$ is the \textit{penalty parameter}. The norm $\| \cdot \|_\eta$ is defined as the Cameron-Martin norm of $C^\eta$.  While the most natural candidate is $\eta = 1$, taking something slightly larger can actually help with statistical inference as we will see later on.
 Here, we can see the advantage of a penalized approach as it forces the estimate to lie in the space $\mcH$ which means that the compatibility condition, as discussed in theorems \ref{t:comp} and \ref{t:finite}, is satisfied.  A kernel different from the noise could be used, but one must be careful to make sure that the compatibility condition is met.  If $(\lambda_j, v_j)$ are the eigenvalue/function pairs of the $C$ and $\{X_{i}=\sum_{j=1}^{\infty} x_{ij}v_{j}:i=1,\dots,N\}$, with $x_{ij} = \langle X_i, v_j \rangle_{\mbH}$, then the estimate can be expressed as
 \begin{align}
 	\hat{\mu} = \frac{1}{N} \sum_{i=1}^{N} \sum_{j=1}^{\infty} \frac{\lambda_{j}^\eta}{\lambda_{j}^\eta+\phi} x_{ij}v_{j}, \label{e:deriv}
 \end{align}
  We then have the following result.
 
 \begin{theorem} \label{t:RKHS.bound}
 	If the $\mbH$ norm of any element of the population is bounded by a constant $0 < \tau < \infty$ then the GS of $\hat \mu$ for $\eta \geq 1$ is bounded by
 	\[
 	\Delta_n^2 \leq \frac{4\tau^2}{N^2} \sup_j \frac{\lambda_j^{2\eta - 1}}{( \lambda_j^\eta   + \phi)^2}
 	\]
 	or more simply
 	\[
 	\Delta_n^2 \leq \frac{\tau^2}{N^2 \phi^{1/\eta}}
 	\left[ \frac{(2 \eta - 1)^{2-1/\eta}}{\eta^2} \right]
 	\leq \frac{4 \tau^2 }{N^2 \phi^{1/\eta}}.
 	\]
 \end{theorem}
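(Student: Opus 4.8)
The plan is to bound the Cameron-Martin norm of the difference between two mean estimates that differ in a single record, and then optimize the resulting spectral expression. Fix datasets $D \sim D'$ that differ only in the first record, say $X_1$ versus $X_1'$. Using the closed form \eqref{e:deriv}, all terms with $i \neq 1$ cancel, leaving
\[
\hat\mu_D - \hat\mu_{D'} = \frac{1}{N}\sum_{j=1}^\infty \frac{\lambda_j^\eta}{\lambda_j^\eta + \phi}\,(x_{1j} - x_{1j}')\, v_j,
\]
where $x_{1j} = \langle X_1, v_j\rangle_\mbH$ and analogously for $x_{1j}'$. First I would read off the coefficient of $v_j$ and insert it into the Cameron-Martin norm $\|x\|_\mcH^2 = \sum_j \lambda_j^{-1}\langle x, v_j\rangle_\mbH^2$ (cf.\ \eqref{e:K}). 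The factor $\lambda_j^{-1}$ from the norm combines with the $\lambda_j^{2\eta}$ coming from the squared coefficient, yielding
\[
\|\hat\mu_D - \hat\mu_{D'}\|_\mcH^2 = \frac{1}{N^2}\sum_{j=1}^\infty \frac{\lambda_j^{2\eta-1}}{(\lambda_j^\eta + \phi)^2}\,(x_{1j} - x_{1j}')^2.
\]

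Next I would factor the spectral weight out of the sum by its supremum over $j$ and recognize the remainder as a squared $\mbH$-norm: by Parseval, $\sum_j (x_{1j} - x_{1j}')^2 = \|X_1 - X_1'\|_\mbH^2 \leq (\|X_1\|_\mbH + \|X_1'\|_\mbH)^2 \leq 4\tau^2$, using the triangle inequality and the boundedness hypothesis. Taking the supremum over $D' \sim D$ then gives the first stated bound,
\[
\Delta_n^2 \leq \frac{4\tau^2}{N^2}\sup_j \frac{\lambda_j^{2\eta-1}}{(\lambda_j^\eta + \phi)^2}.
\]

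To obtain the eigenvalue-free bound I would relax the discrete supremum over $\{\lambda_j\}$ to a supremum over all $\lambda > 0$ of the smooth function $g(\lambda) = \lambda^{2\eta-1}/(\lambda^\eta + \phi)^2$, which is legitimate since the eigenvalues form a subset of $(0,\infty)$. Differentiating $\log g$ and setting the result to zero gives, after the substitution $u = \lambda^\eta$, the unique interior critical point $\lambda^\eta = (2\eta-1)\phi$. Since $g$ vanishes as $\lambda \to 0^+$ (because $2\eta-1 \geq 1$) and as $\lambda \to \infty$, this critical point is the global maximizer; substituting back, and noting that $\lambda^\eta + \phi = 2\eta\phi$ there, produces the exact value $(2\eta-1)^{2-1/\eta}/(4\eta^2 \phi^{1/\eta})$, which gives the bracketed middle expression. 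The crudest inequality then follows because for $\eta \geq 1$ one has $2\eta - 1 \geq 1$ and exponent $2 - 1/\eta \leq 2$, so $(2\eta-1)^{2-1/\eta} \leq (2\eta-1)^2 \leq 4\eta^2$, collapsing the bracket to at most $4$.

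The only genuine computation is the one-variable optimization of $g$; the two points requiring a little care are confirming that the single interior critical point is a maximum (which follows from $g$ vanishing at both ends and being positive in between) and justifying the passage from the discrete supremum to the continuous one. Neither is a real obstacle, so I expect the whole argument to be short once the norm computation in the first step is laid out correctly.
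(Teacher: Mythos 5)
Your proposal is correct and follows essentially the same route as the paper's proof: the same spectral expansion of $\hat\mu_D - \hat\mu_{D'}$, factoring out $\sup_j \lambda_j^{2\eta-1}/(\lambda_j^\eta+\phi)^2$ and bounding the remainder by $4\tau^2$ via Parseval, then maximizing $\lambda \mapsto \lambda^{2\eta-1}/(\lambda^\eta+\phi)^2$ at $\lambda^\eta = (2\eta-1)\phi$. The only (harmless) differences are that you certify the maximizer via boundary behavior rather than a second-derivative check, and you make explicit the elementary inequality $(2\eta-1)^{2-1/\eta} \leq 4\eta^2$ that the paper leaves as "another calculus argument."
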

 The resulting bound is practically very useful.  Data can be rescaled so that their $\mbH$ bound is, for example, 1, and then the remaining quantities are all tied to the used noise/RKHS.  Thus, the bound can be practically computed and the corresponding releases are guaranteed to achieve DP.
 
 We conclude with a final theorem that provides a guarantee on the utility of $ \hat \mu + \sigma Z$.  One interesting note is that in finite dimensional problems, the magnitude of the noise added for privacy is often of a lower order than the statistical error of the estimate.  However, in infinite dimensions, this is no longer true unless $\eta >1$.  This is driven by the fact that the squared bias is of the order $\phi$, and thus $\phi$ must go to zero like $N^{-1}$ if it is to balance the variance of $\hat \mu$.  However, in that case the magnitude of the noise added for privacy is of the order $\sigma^2 \asymp N^{-2+1/\eta}$.  If $\eta=1$, then $\sigma^2$ is also of the order $N^{-1}$, while if $\eta > 1$, then it is of a lower order and thus asymptotically negligible.   We remind the reader that the noise and thus $C$ is arbitrary, so $\eta$ can be chosen in a way that is appropriate for $\mu$ by using a rougher noise.
 
 \begin{theorem}
 	Assume the $X_i$ are iid elements of $\mbH$ with norm bounded by $\tau < \infty$.  Define
 	\[
 	\tilde \mu := \hat \mu + \sigma Z,
 	\]
 	where
 	\[
 	\sigma^2 
 	= \left[\frac{2 \log(2/\delta)}{ \epsilon^2 } \right] \times \left[\frac{\tau^2 (2\eta - 1)^{2-1/\eta}}{  N^2 \phi^{1/\eta} \eta^2} \right].
 	\]
 	If the tuning parameter, $\phi$, satisfies $ \phi \propto N^{-1}$ and if $\|\mu\|_\eta < \infty$ then we have
 	\[
 	\E\| \tilde \mu - \hat \mu\|_\mbH^2 = o(N^{-1}) \qquad \text{and} \qquad \E\| \tilde \mu -  \mu \|^2_{\mbH} = O\left(N^{-1} \right),
 	\]
 	while $\tilde \mu$ achieves ($\epsilon$-$\delta$) DP in $\mbH$.
 \end{theorem}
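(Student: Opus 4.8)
The plan is to prove the three assertions in turn: the differential privacy guarantee, the negligibility of the injected noise relative to the statistical error, and the overall mean-squared-error rate. The privacy claim is essentially immediate from the machinery already developed. Because $\hat\mu$ is defined by minimizing over $m\in\mcH$, it automatically lies in the Cameron--Martin space of $Z$, so $\hat\mu$ is compatible with $Z$ in the sense of Definition \ref{d:compatible}. Theorem \ref{t:RKHS.bound} bounds the global sensitivity by $\Delta_n^2 \le \tau^2(2\eta-1)^{2-1/\eta}/(N^2\phi^{1/\eta}\eta^2)$, and the prescribed $\sigma^2$ is exactly $2\log(2/\delta)\,\epsilon^{-2}$ times this bound; hence $\sigma^2\ge 2\log(2/\delta)\,\epsilon^{-2}\Delta_n^2$, which is the hypothesis of Theorem \ref{t:finite}. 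Invoking Theorem \ref{t:func} (using $\epsilon\le 1$) then yields $(\epsilon,\delta)$-DP over $\mbH$ for the full release $\tilde\mu$.

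For the injected noise, I would use that $Z$ is independent of the data, mean zero, and has covariance $C$, so that
\[
\E\|\tilde\mu-\hat\mu\|_\mbH^2 = \sigma^2\,\E\|Z\|_\mbH^2 = \sigma^2\,\mathrm{tr}(C),
\]
which is finite since $C$ is trace class. Substituting the stated $\sigma^2$ together with $\phi\propto N^{-1}$ gives $\sigma^2\asymp N^{-2+1/\eta}$, so this contribution is $O(N^{-1})$ for every $\eta\ge 1$ and, more sharply, $o(N^{-1})$ precisely when $\eta>1$ (when $\eta=1$ it is exactly of order $N^{-1}$). This dichotomy is the crux of the utility story, and I would be careful to flag that the $o(N^{-1})$ conclusion requires the slightly stronger smoothing $\eta>1$.

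For the overall error I would expand $\tilde\mu-\mu=(\hat\mu-\mu)+\sigma Z$; independence and $\E Z=0$ kill the cross term, leaving $\E\|\tilde\mu-\mu\|_\mbH^2=\E\|\hat\mu-\mu\|_\mbH^2+\sigma^2\,\mathrm{tr}(C)$, whose second summand is already $O(N^{-1})$. Working in the eigenbasis and writing $w_j=\lambda_j^\eta/(\lambda_j^\eta+\phi)$ and $\mu_j=\langle\mu,v_j\rangle_\mbH$, the representation \eqref{e:deriv} gives the bias--variance split
\[
\E\|\hat\mu-\mu\|_\mbH^2 = \frac{1}{N}\sum_j w_j^2\,\E(x_{1j}-\mu_j)^2 + \sum_j (1-w_j)^2\mu_j^2 .
\]
Since $0\le w_j<1$, the variance term is at most $N^{-1}\sum_j \E(x_{1j}-\mu_j)^2 = N^{-1}\E\|X_1-\mu\|_\mbH^2\le \tau^2 N^{-1}$, using the norm bound $\|X_1\|_\mbH\le\tau$.

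The main obstacle is the bias term, which in infinite dimensions does not vanish at rate $\phi$ automatically. Here I would exploit the Cameron--Martin smoothness $\|\mu\|_\eta^2=\sum_j\mu_j^2/\lambda_j^\eta<\infty$ via the elementary inequality $(1-w_j)^2=\bigl(\phi/(\lambda_j^\eta+\phi)\bigr)^2\le \phi/\lambda_j^\eta$, obtained by bounding one factor $\phi/(\lambda_j^\eta+\phi)\le 1$ and the other by $\phi/\lambda_j^\eta$. This yields $\sum_j(1-w_j)^2\mu_j^2\le \phi\sum_j \mu_j^2/\lambda_j^\eta=\phi\|\mu\|_\eta^2=O(\phi)=O(N^{-1})$, so that $\E\|\hat\mu-\mu\|_\mbH^2=O(N^{-1})$ and hence $\E\|\tilde\mu-\mu\|_\mbH^2=O(N^{-1})$. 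The only remaining work is rate bookkeeping: balancing squared bias ($\asymp\phi$) against variance ($\asymp N^{-1}$) forces $\phi\asymp N^{-1}$, and at that choice the privacy noise $\sigma^2\,\mathrm{tr}(C)\asymp N^{-2+1/\eta}$ is dominated by the estimation error, strictly so when $\eta>1$, which is the phenomenon the theorem is designed to exhibit.
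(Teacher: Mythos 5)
Your proof is correct and follows exactly the route the paper sketches in the prose preceding the theorem (the supplement contains no separate proof of this result): compatibility plus Theorem \ref{t:RKHS.bound} gives the DP claim via Theorem \ref{t:func}, the added noise contributes $\sigma^2\,\mathrm{tr}(C)\asymp N^{-2+1/\eta}$, and the bias--variance split in the eigenbasis with $(1-w_j)^2\le \phi/\lambda_j^\eta$ and $\|\mu\|_\eta<\infty$ yields squared bias $O(\phi)$ and variance $O(N^{-1})$. You are also right to flag that $\E\|\tilde\mu-\hat\mu\|_\mbH^2=o(N^{-1})$ holds only for $\eta>1$ (at $\eta=1$ it is exactly of order $N^{-1}$), which matches the paper's own discussion and indicates the theorem statement implicitly assumes $\eta>1$ for that clause.
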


 \section{Empirical Study} \label{s:es}
Here we briefly present simulations with $\mbB = L^2[0,1]$ to explore the impact of parameters on the utility of sanitized releases. We consider the problem of estimating the mean function from a random sample of functional observations using RKHS smoothing, as discussed in Section \ref{s:g.s.bounds}.
%

For the RKHS, $\mcH$, we consider the Gaussian (squared exponential) kernel : 
\begin{align} \label{Kernels}
C_{1}(t,s) & =\exp\left\{\dfrac{-|t-s|^2}{\rho} \right\} 
\end{align}
We simulate data using the Karhunen-Loeve expansion, a common approach in FDA simulation studies.  In particular we take
\begin{align}\label{KL}
X_{i}(t)=\mu(t)+\sum_{j=1}^{m} j^{-p/2} U_{ij}v_{j}(t) \ \ \ \ \ \ t \in [0,1],
\end{align}
where the scores, $U_{ij}$, are drawn iid uniformly between $(-0.4,0.4)$. 
The functions, $v_j(t)$, are taken as the eigenfunctions of $C_1$ 
and $m$ was taken as the largest value such that $\lambda_m$ was numerically different than zero in {\tt R} (usually about $m=50$). All of the curves are generated on an equally spaced grid between $0$ and $1$, with 100 points and the RKHS kernel and the noise kernel will be taken to be the same (i.e. $\eta = 1$). The range parameter for the kernel used to define $\mcH$ is taken $\rho = 0.001$ and the smoothness parameter of the $X_i(t)$ is set to $p=4$ . The mean function, sample size and DP parameters will also be set as $\mu(t)=0.1 \sin(\pi t)$, $N=25$, $(\epsilon=1 , \delta=0.1)$, respectively. We vary the penalty, $\phi$, from $10^{-6}$ to $1$ to consider its effect.

\textcolor{black}{Note that we take $\tau=\sup\|X_i\|_\mbH$ for any $i \in 1 , \hdots , N$ and thus all qualities needed for Theorem \ref{t:RKHS.bound} are known.} The risk is fixed by choosing the $\epsilon$ and $\delta$ in the definition of DP.  We thus focus on the utility of the privacy enhanced curves by comparing them graphically to the original estimates.
Ideally, the original estimate will be close to the truth and the privacy enhanced version will be close to the original estimate.  What we will see is that by compromising slightly on the former, one can makes substantial gains in the latter. 

In Figure \ref{DP_pen1} we plot all of the generated curves in grey, the RKHS smoothed mean in green, and the sanitized estimate in red. We can see that as the penalty increases, both estimates shrink towards each other and to zero.  There is a clear ``sweet spot" in terms of utility, where the smoothing has helped reduce the amount of noise one has to add to the estimate while not over smoothing. Further simulations that explore the impact of different parameters can be found in the supplemental \ref{s:ees}.  

\begin{figure}[ht!]
	\centering
	\includegraphics[width=1\columnwidth]{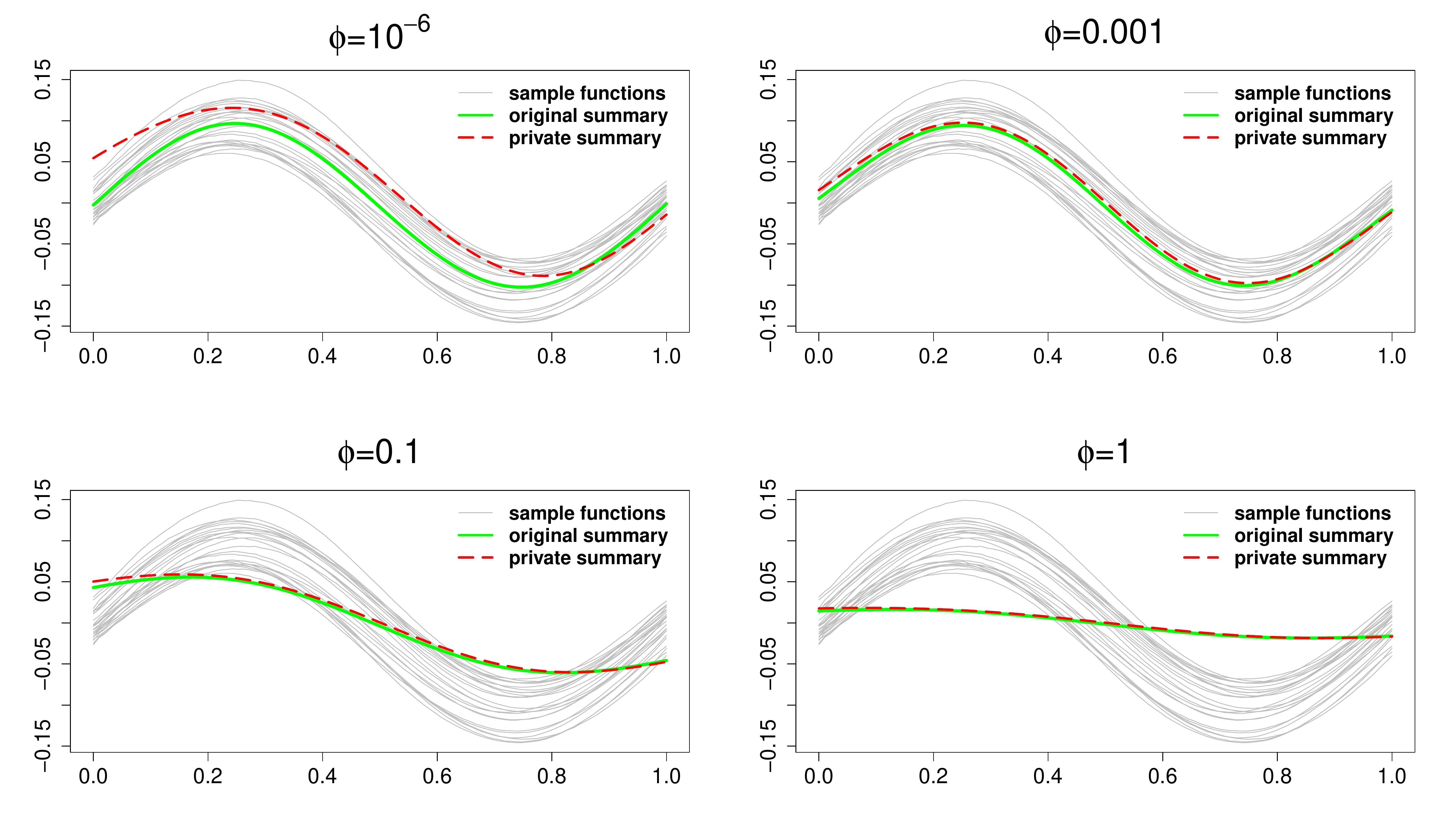}
	\caption{Original and private RKHS smoothing mean with Gaussian Kernel ($C_{1}$) for different values of penalty parameter $\phi$ }
	\label{DP_pen1}
\end{figure}

 \section{Applications}  \label{s:app}

In this section we illustrate our method on an application involving brain scans ({\em diffusion tensor imaging}, DTI) that gives fractional anisotropy (FA) tract profiles for the corpus callosum (CCA) and the right corticospinal tract (RCST) for patients with multiple sclerosis as well as controls; data are part of the {\tt refund} \citep{refund:2016} {\tt R} package. 
This type of imaging data is becoming more common and the privacy concerns can be substantial.  Images of the brain or other major organs might be quite sensitive source of information, especially if the study is related to some complex disease such as cancer, HIV, etc.  Thus it is useful to illustrate how to produce privacy enhanced versions of function valued statistics such as mean functions.
We focus on the \textit{CCC} data, which includes 382 patients measured at 93 equally spaced locations along the CCA.

Our aim is to release a sanitized RKHS estimate of the mean function. We consider three kernels $C_{1}$, $C_{3}$ and $C_{4}$ which correspond to the Gaussian kernel, Mat\'ern kernel with $\nu=3/2$, and the exponential kernel, respectively. Each kernel is from the Mat\'ern family of covariances \citep{stein2012interpolation}.  The exact forms are given in \eqref{Kernels} in the supplement, where a fourth kernel $C_2$ is also considered that is ``inbetween" $C_1$ and $C_3$ (hence the odd numbering). In all settings we take $(\epsilon,\delta)=(1,0.1)$ and select the penalty, $\phi$, and range parameter, $\rho$, according to two different approaches.  The first is regular \textit{Cross Validation}, CV, and the second we call \textit{Private Cross Validation}, PCV. In CV we fix $\phi$ and then take the $\rho$ that gives the minimum 10-fold cross validation score. We do not select $\phi$ based on cross validation because, based on our observations, the minimum score is always obtained at the  minimum $\phi$ for this data. In PCV we take nearly the same approach, however,
when computing the CV score we take the \textit{expected} difference (via Monte-Carlo) between our privacy enhanced estimate and the left out fold from the original data.  In other words, we draw a sample of privacy enhanced estimates, compute a CV score for each one, and then average the CV scores.     
In our simulations we use 1000 draws from the distribution of the sanitized estimate.  We then find both the $\phi$ and $\rho$ which give the optimal PCV score based on a grid search.

For the CV-based results, for each of the kernels, we fixed a value for  $\phi \in \{ 0.0001, 0.001, 0.01, 0.03 \}$ and then vary the $\rho$ between $[0.01,2]$.
We use the optimal parameter values in Table \ref{Par_CV_Reg1} to produce the privacy enhanced estimates for $C_{1}$ in Figure \ref{DTI_Gau_R1}. We see that the utility of the privacy enhanced versions increases as $\phi$ increases, however, the largest values of $\phi$ produce estimates that are over smoothed. There is a good trade-off between privacy and utility with $\phi=0.01$ for $C_{1}$. The results for other kernels are reviwed in supplemental \ref{s:eapp}.

\begin{figure}[ht!]
	\centering
	\includegraphics[width=1\columnwidth]{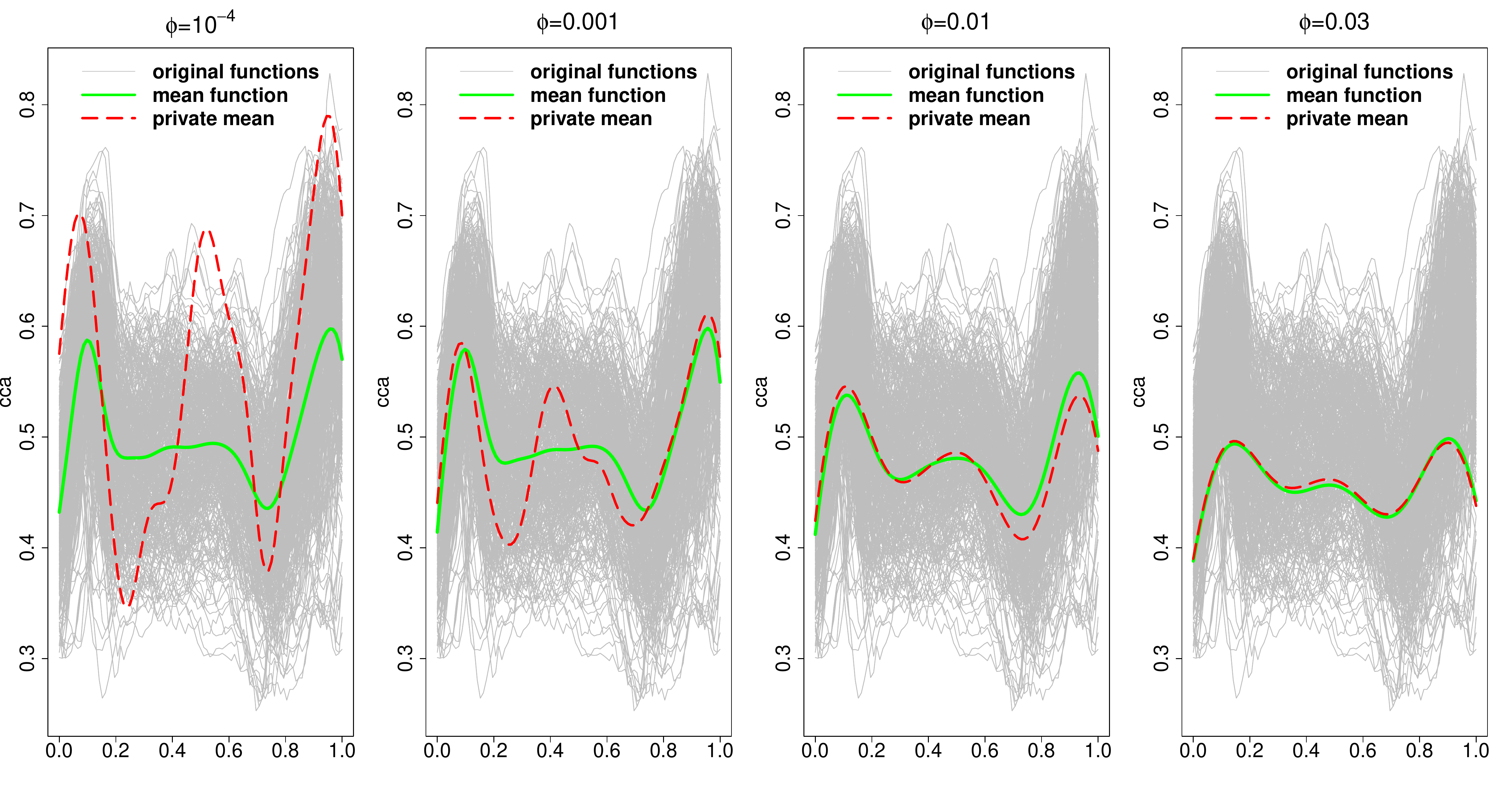}
	\caption{Mean estimate for cca and its private release using Gaussian kernel ($C_{1}$) with CV.}
	\label{DTI_Gau_R1}
\end{figure}

Turning to PCV, we varied $\phi$ in range $[10^{-4} , 0.1]$ for each of the kernels but $\rho$ will be varied in $[0.01 , 0.1]$, $[0.05 , 0.5]$ and $[0.2,1]$ for $C_1$,$C_3$ and $C_4$ respectively. Here we use the optimal parameters in Table \ref{Par_CV_Irr1} to generate privacy enhanced estimates, given in Figure \ref{DTI_Irr1}.
Here we see that the utility of the privacy enhanced estimates is excellent for $C_1$. Using PCV tends to over smooth the original estimates (green lines), however, by slightly over smoothing we make substantial gains in utility as we add less noise.

\begin{figure}[ht!]
	\centering
	\includegraphics[width=1\columnwidth]{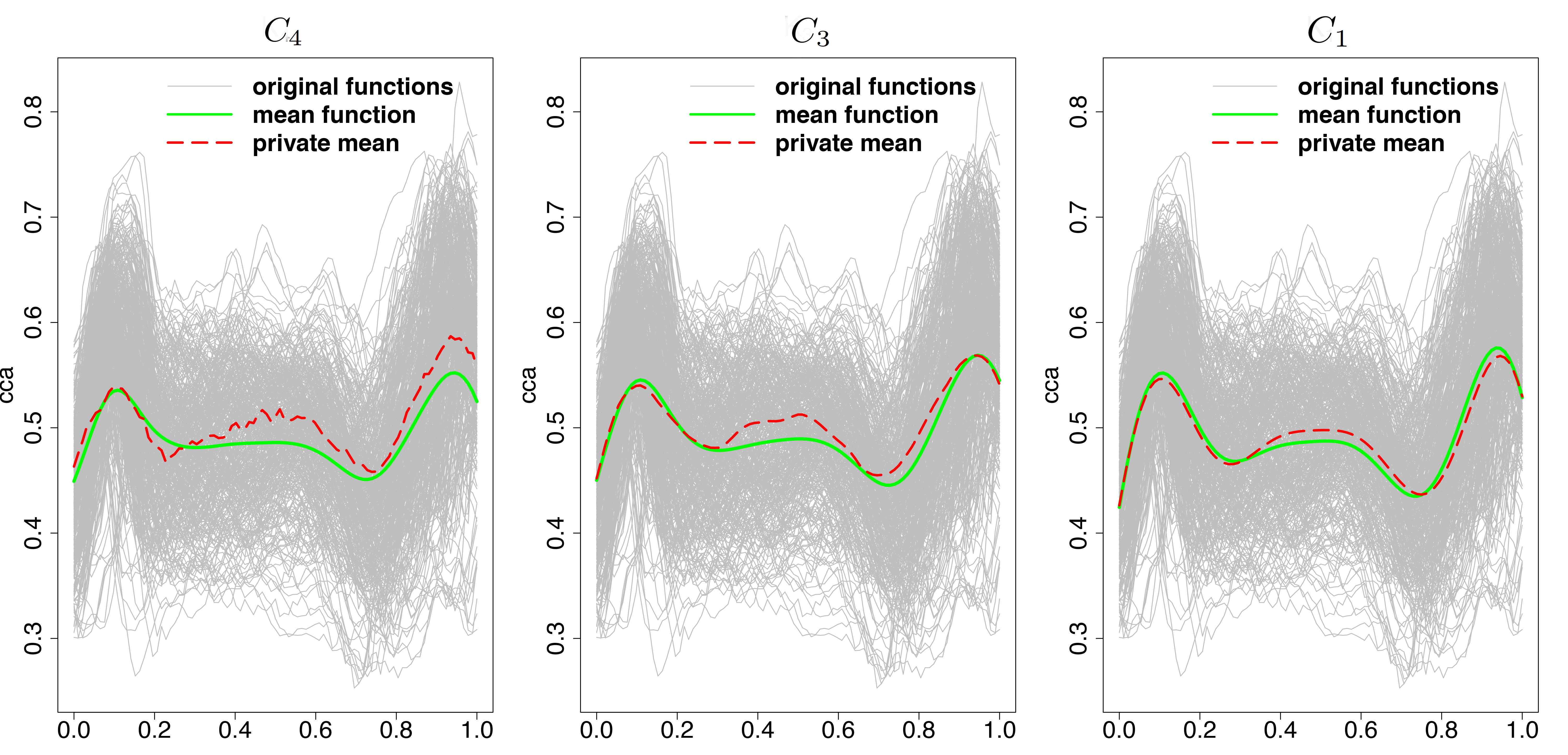}
	\caption{Mean estimate of CCA and its private release for Exponential ($C_{4}$), Mat\'ern$(3/2)$ ($C_{3}$) and Gaussian kernels ($C_{1}$) using PCV.}
	\label{DTI_Irr1}
\end{figure}
 
 \section{Conclusions}
 In this work we have provided a mechanism for achieving $(\epsilon,\delta)$-DP for a wide range of summaries related to functional parameter estimates.  This work expands dramatically upon the work of \citet{hall:rinaldo:wasserman:2013,alda2017bernstein,smith2018differentially}, who explored this topic in the context of point-wise releases of functions.  Our work covers theirs as a special case, but also includes path level summaries, full function releases, and nonlinear releases quite broadly.
 In general, functional data can be highly identifiable compared to scalar data. In biomedical settings, for example, a study may collect and analyze  many pieces of information such as genomic sequences, biomarkers, and biomedical images,  which either alone or linked with each other and demographic information, lead to greater disclosure risk
 \citep{Lippert19092017}.
 
 The heart of our work utilizes densities for functional data in a way that has not yet been explored in the functional data literature.  Previously, usable densities for functional data were thought not to exist \citep{delaigle:hall:2010} and researchers instead relied on various approximations to densities.  We showed how useful forms for densities can be constructed and utilized.  However, it is still unclear how extensively these densities can be used for other FDA problems.  
 
 The literature on privacy for scalar and multivariate data is quite extensive, while there has been very little work done for FDA and related objects.  Therefore, there are many opportunities for developing additional theory and methods for such complicated data.  One issue that we believe will be especially important is the role of smoothing and regularization in preserving the utility of privacy enhanced releases.  As we have seen, a bit of extra smoothing can go a long way in terms of maintaining privacy, however, the type of smoothing may need to be properly tailored to the application for much complicated objects. 


\bibliography{privacy}
\bibliographystyle{icml2019}

\appendix
\clearpage
\setcounter{page}{1}
\begin{center}
	\bf{\large Supplemental Material}
\end{center}

\section{Proofs}

\begin{proof}[Proof of Theorem \ref{t:finite}]
	For $f_i \in \mcK$ define
	\[
	\nu_D^\top = (f_1(\theta_D). \dots, f_N(\theta_D)),
	\]
	and the matrix $\bK = \{ \langle f_i, f_j \rangle_{\mcK} \} $; recall $C(f_i,f_j) = \langle f_i, f_j \rangle_{\mcK}$.  Using Proposition 3 of \citet{hall:rinaldo:wasserman:2013}, we then need only to show that
	\[
	(\nu_D - \nu_{D'})^\top \bK^{+} (\nu_D - \nu_{D'}) \leq \| \theta_D - \theta_{D'}\|^2_{\mcH},
	\]  
	where $+$ denotes the Moore-Penrose generalized inverse.  We take a common strategy to such problems by showing that the left hand side can be expressed as $\| P(\theta_D - \theta_{D'})\|^2_{\mcH}$, where $P$ is a projection operator.  Recall that we can move between $\mcK$ and $\mcH$ via the transformation $ h =  C T_h$ for $h \in \mcH$ and $T_h \in \mcK$.      
	Define the operator, $P_1: \mbB \to \Span\{f_1,\dots,f_N\} \subset \mcK$ as
	\[
	P_{1}(x)=\sum_{i=1}^{N}f_{i}\sum_{j=1}^{N} (K^{
	+})_{ij} f_{j} (x)
	\]
	and its analog into $\mcH$, $P: \mbB \to \Span\{C(f_1),\dots,C(f_N)\}$:
	\[
	P(x) = C(P_1(x)) = \sum_{i=1}^{N}C(f_{i})\sum_{j=1}^{N} (K^{+})_{ij} f_{j} (x).
	\]
	Notice that while $P_1$ maps elements of $\mbB$ to $\mcK \supset B^*$, $P_2$ maps elements of $\mbB$ into the Cameron-Martin space, $\mcH \subset \mbB$.
	By the reproducing property, there exists $T_{\theta_D - \theta_{D'}} \in \mcK$ such that 
		\begin{align*}
		&\langle T_{\theta_D - \theta_{D'}}, P_1(\theta_D - \theta_{D'}) \rangle_{\mcK} \\
		& = \sum_{i=1}^{N} \langle f_{i}, T_{\theta_D - \theta_{D'}} \rangle_{\mcK} \sum_{j=1}^{N} (K^{+})_{ij} f_{j} (\theta_D - \theta_{D'} )\\
		& = \sum_{i=1}^{N} f_{i}(\theta_D - \theta_{D'})  \sum_{j=1}^{N} (K^{+})_{ij} f_{j} (\theta_D - \theta_{D'} )\\
		& = (\nu_D - \nu_{D'})^\top \bK^{+} (\nu_D - \nu_{D'}).
		\end{align*}
	Moving to $\mcH$ we have
	\[
	 \langle T_{\theta_D - \theta_{D'}}, P_1(\theta_D - \theta_{D'}) \rangle_{\mcK}
	 = \langle \theta_D - \theta_{D'}, P(\theta_D - \theta_{D'}) \rangle_{\mcH}.
	\]
	If we show that P is a projection operator over $\mcH$, i.e., symmetric and idempotent, we will have the desired bound.
	
	First, $P$ is \textbf{idempotent} by direct verification:
	\begin{align*}
	P^2(x) & = \sum_{i=1}^{N}C(f_{i})\sum_{j=1}^{N} (K^{+})_{ij} f_{j} \left( \sum_{k=1}^{N}C(f_{k})\sum_{l=1}^{N} (K^{+})_{kl} f_{l}(x) \right)
	\\
	& = \sum_{i=1}^{N}C(f_{i})\sum_{j=1}^{N} (K^{+})_{ij} \sum_{k=1}^{N}  C(f_{k}, f_{j}) \sum_{l=1}^{N} (K^{+})_{kl} f_{l}(x)
	\\
	& = \sum_{i=1}^{N}C(f_{i}) \sum_{l=1}^{N} (K^{+})_{il} f_{l}(x)
	= P(x).
	\end{align*}
	Second, we show $P$ is \textbf{symmetric} with respect to the $\mcH$ inner product by making repeated use of the reproducing property:
	\begin{align*}
	\langle P(x) , y \rangle_{\mcH} & = \langle P_1(x) , T_y \rangle_{\mcK}  \\
	& = \sum_{i=1}^{N}f_{i}(y)\sum_{j=1}^{N} (K^{-1})_{ij} f_{j} (x) \\
	& = \langle T_x , P_1(y) \rangle_{\mcK} = \langle x, P(y) \rangle_{\mcH}.
	\end{align*}
	Hence P is a projection operator from $\mbH$ to $\mbK$, and the claim of the theorem holds.
\end{proof}

\begin{proof}[Proof of Theorem \ref{t:func}]
	We aim to show that for any measurable subset $A \subset \mbB$ we have
	\[
	P_D(A) \leq e^{\epsilon}P_{D^\prime}(A) + \delta,
	\]
	where $P_D$ denotes the measure of $\tilde f_D$.  
	Recall the global sensitivity for the functional case is
	\[
	\Delta^2=\sup_{D \sim D^\prime} \| f_D - f_{D^\prime}\|_\mcH^2 .
	\]
	The density of $\tilde f_D$ wrt $\sigma Z$ is
	\[
	\exp\left\{-\frac{1}{2\sigma^2}(\| f_D\|_{\mcH}^2  - 2 T_D(x) )\right\},
	\]
	where for simplicity we denote $T_D = T_{\theta_D}$.  
	We equivalently aim to show that
	\begin{align*}
		P_D(A) & = \int_{A} dP_{D}(x)  = \int_{A} \frac{dP_D}{dP_{D^\prime}} (x) dP_{D^\prime}(x)   \\ 
	& \leq e^{\epsilon} \int_{A} dP_{D^\prime}(x) + \delta.
	\end{align*}
	We can express
	\begin{align*}
	& \frac{d P_D}{ d P_{D^\prime}}(x) = \frac{d P_D}{ d Q}(x) / \frac{d P_{D^\prime}}{ d Q}(x)  \\
	& = \exp\left\{-\frac{1}{2\sigma^2}(\| f_D\|_{\mcH}^2   - \| f_{D^\prime} \|_{\mcH}^2
	-2 (T_{D} - T_{D^\prime})(x)   ) \right\}.
	\end{align*}
	Expand
	\begin{align*}
	& \|f_{D^\prime}\|_\mcH^2
	= \|f_{D^\prime} - f_{D} + f_{D} \|_\mcH^2 \\
	& = \|f_{D^\prime} - f_{D}\|_{\mcH}^2 + \| f_{D} \|_{\mcH}^2 - 2\langle f_{D} - f_{D^\prime},f_{D} \rangle_{\mcH},
	\end{align*}
	and recall that we can write $\langle x,y \rangle_{\mcH} = T_x(y)$.
	So we have
	\begin{align*}
	& \frac{d P_D}{ d P_{D^\prime}}(x) =  \\
	& \exp\left\{-\frac{1}{2\sigma^2}(- \|f_{D} - f_{D^\prime}\|_\mcH^2 - 2 (T_{D} - T_{D^\prime})(x -  f_{D})) \right\}.
	\end{align*}
	Decompose $\mbB = \mcH_1 \bigcup \mcH_2$ where for $x \in \mcH_1$ we have $\dfrac{d P_D}{ d P_{D^\prime}}(x) \leq e^{\epsilon}$ and for $x \in \mcH_2$ we have $\dfrac{d P_D}{ d P_{D^\prime}}(x) > e^{\epsilon}$.  Then trivially we have that
	\[
	P_D(A) = P_D(A \cap \mcH_1) +  P_D(A \cap \mcH_2).
	\]
	Using the definition of $\mcH_1$ we have that
	\begin{align*}
		& P_D(A \cap \mcH_1)
	 = \int_{A \cap \mcH_1} \frac{d P_D}{ d P_{D^\prime}}(x) \frac{d P_{D
			\prime }}{ d Q}(x) \ d Q(x) \\
			& \leq e^{\epsilon} \int_{A \cap \mcH_1} \frac{d P_{D^
			\prime }}{ d Q}(x) \ d Q(x) 
	 \leq e^\epsilon P_{D^\prime}(A).
	\end{align*}

	The proof will be complete if we can show that
	\[
	P_D(A \cap \mcH_2) \leq \delta.
	\]
	This is equivalent to showing that
	\begin{align*}
	& P\biggr( -\frac{1}{2\sigma^2}(- \|f_{D} - f_{D^\prime}\|_\mcH^2  \\ & - 2 (T_{D} - T_{D^\prime})(X -  f_{D}))  \geq \epsilon \biggl)
	\leq \delta,
	\end{align*}
	where $X \sim N(0, \sigma^2 C)$.  This can equivalently be stated as
	\begin{align*}
	&-\frac{1}{2\sigma^2}(- \|f_{D} - f_{D^\prime}\|_\mcH^2 - 2 (T_{D} - T_{D^\prime})(X -  f_{D})) \geq \epsilon \\
	& \Leftrightarrow    (T_{D} - T_{D^\prime})(X)  \geq  \sigma^2\left[\epsilon  - \frac{1}{2 \sigma^2}\|f_{D^\prime} - f_D\|_\mcH^2\right]   
	\end{align*}
	However $ (T_{D} - T_{D^\prime})(X)$ is a normal random variable with mean zero and variance $\| f_D - f_{D'}\|_{\mcH}^2$.  
	So, if $Z \sim N(0,1)$ then we have that
	\begin{align*}
	& P\left( -\frac{1}{2\sigma^2}(- \|f_{D} - f_{D^\prime}\|_\mcH^2 - 2 (T_{D} - T_{D^\prime})(X -  f_{D}))  \geq \epsilon \right) \\
	& \leq P\left( \sigma \Delta Z \geq \sigma^2\left[\epsilon  - \frac{1}{2\sigma^2}\|f_{D} - f_{D^\prime}\|_\mcH^2 \right] \right) \\
	& \leq P\left( Z \geq \frac{\sigma}{\Delta}\left[\epsilon  - \frac{\Delta^2}{2\sigma^2} \right] \right)\\
	& = P\left(  Z \geq    \sqrt{2 \log(2/\delta)}  - \frac{\epsilon }{2\sqrt{2 \log(2/\delta)}}\right)  \\
	& \leq P\left(  Z \geq    \sqrt{2 \log(2/\delta)}  - \frac{1}{2\sqrt{2 \log(2/\delta)}}\right)  \leq \delta
	\end{align*}
	as long as $\epsilon \leq 1$ \citep{hall:rinaldo:wasserman:2013}.
	
\end{proof}

\subsection{Derivation of RKHS Estimate} \label{p:RKHS.s}
Recall that
\[
g(m) = \frac{1}{N} \sum_{n=1}^N \| X_n - m\|_{\mbH}^2 + \phi \| m\|_{\eta}^2.
\]
Without loss of generality, we may drop any terms not involving $m$ and write
\begin{align*}
g(m) & = - 2 \langle \bar X, m \rangle_{\mbH} + \|m\|_{\mbH}^2 + \phi \|m\|_{\eta}^2 \\
& = - 2 \langle \bar X, m \rangle_{\mbH} + \langle m, m \rangle_{\mbH} + \phi \langle m, C^{-\eta} m \rangle_{\mbH} .
\end{align*}
Since we are working with a Hilbert space, it can be identified with its own dual.  We transfer everything over to the Cameron-Martin Space of $C^\eta$, call it $\mcH_\eta$, which contains $\mcH$:
\begin{align*}
 g(m)  & =  - 2 \langle \bar X, C^\eta  m \rangle_{\mcH} \\
& + \langle m , C^\eta m \rangle_{\mcH} + \phi \langle   m, m \rangle_{\mcH}.
\end{align*}
We then have that
\[
g'(m) = -2 C^\eta \bar X + 2 C^\eta m + 2 \phi  m.
\]
Setting the above equal to zero we have that
\begin{align} \label{p:mu.hat}
C^\eta \bar{X}=C^\eta \hat{\mu}+\phi \hat \mu .
\end{align}
or
\[
\hat \mu = ( C^\eta + \phi I )^{-1} C^\eta (\bar X).
\]
Since $(\lambda_{j},v_j)$ are the eigenvalues/eigenfunctions of $C$ and $X_{i}=\sum_{j=1}^{\infty} x_{ij}v_{j}$ then we have
\begin{align*}
\hat{\mu}&  =\sum_{j=1}^\infty \langle \hat{\mu}, v_j \rangle_{\mbH} v_j \nonumber 
 = \sum_{j=1}^\infty \frac{ \lambda_j^\eta }{\lambda_j^\eta + \phi } \langle \bar X, v_j \rangle_{\mbH} v_j \nonumber \\
& = \frac{1}{N} \sum_{i=1}^{N} \sum_{j=1}^{\infty} \frac{\lambda_{j}^\eta}{\lambda_{j}^\eta+\phi} x_{ij}v_{j}. \nonumber
\end{align*}

\begin{proof}[Proof of Theorem \ref{t:RKHS.bound}]
	The upper bound for $\Delta_n^2 $ is derived as following:
	\begin{align*}
	\Delta_n^2 & =\sup_{D \sim D^{\prime} } \| \hat{\mu}_{D} - \hat{\mu}_{D^{\prime}} \|_{\mcH}^{2}  \\
	& = \sup_{D \sim D^{\prime} } \| \frac{1}{N} \sum_{j=1}^{\infty} \frac{\lambda_{j}^\eta}{(\lambda_{j}^\eta+\phi)}(x_{1j}-x_{1^{\prime}j})v_{j} \|_{\mcH}^{2} \\
	& \leq \frac{1}{N^2} \sup_j \frac{\lambda_j^{2\eta -1 }}{(\lambda_j^\eta + \phi)^2}\sup_{D \sim D^{\prime}} \sum_{j=1}^{\infty}  \langle X_{1} - X_{1}^\prime , v_j\rangle_{\mbH}^{2}  \\
	& =\frac{1}{N^2} \sup_j \frac{\lambda_j^{2\eta-1}}{(\lambda_j^\eta + \phi)^2} \sup_{D \sim D^{\prime}} \| X_1 - X_1^\prime \|_{\mbH}^{2}  \\
	& \leq \frac{4 \tau^2}{N^2} \sup_j \frac{\lambda_j^{2\eta -1 }}{(\lambda_j^\eta + \phi)^2}.
	\end{align*}
	We can also derive a simpler bound by examining the function
	\[
	f(x) = \frac{x^{2 \eta -1}}{(x^\eta+\phi)^2}, \qquad x \geq 0,
	\]
	and where it attains its maximum.  Taking the derivative we have $f'(x) = 0$ if and only if 
	\begin{align*}
	& (x^\eta + \phi)^2 (2 \eta - 1) x^{2 \eta -2} - x^{2\eta -1} 2 \eta x^{\eta-1} (x^\eta + \phi) = 0 \\
	& (x^\eta + \phi) (2 \eta - 1)  -  2 \eta x^{\eta} =0 \\
	& x = (\phi ( 2 \eta -1 ))^{1/\eta}.
	\end{align*}
	Taking a second derivative shows that this is where the maximum occurs.  We then have that
	\[
	f(x) \leq \frac{(\phi(2 \eta -1))^{2 - 1/\eta}}{(\phi(2 \eta -1) + \phi)^2}
	= \phi^{-1/\eta} \frac{(2 \eta - 1)^{2 - 1/ \eta}}{4 \eta^2}
	\]
	Thus, we can also use the bound
	\[
	\frac{4 \tau^2}{N^2} \sup_j \frac{\lambda_j^{2\eta-1}}{(\lambda_j^{\eta} + \phi)^2}
	\leq \frac{\tau^2}{N^2 \phi^{1/\eta}} \frac{(2 \eta - 1)^{2 - 1/ \eta}}{\eta^2}.
	\]
	For $\eta=1$, the bound becomes $\tau^2 N^{-2} \phi^{-1}$, while another calculus argument shows that regardless of $\eta$, one will always have
	\[
	\frac{\tau^2}{N^2 \phi^{1/\eta}} \frac{(2 \eta - 1)^{2 - 1/ \eta}}{\eta^2}
	\leq \frac{4\tau^2}{N^2 \phi^{1/\eta}},
	\] 
	as desired.
	
\end{proof}

\section{Extension of Empirical Study} \label{s:ees}
In this section we review the impact of different parameters on the utility of sanitized releases introduced in Section \ref{s:es}.
For the RKHS, $\mcH$, we would consider four popular kernels:
{\small
\begin{align} \label{Kernels}
C_{1}(t,s)&=\exp\left\{\dfrac{-|t-s|^2}{\rho} \right\}  \\
C_{2}(t,s)&=\left(1+\dfrac{\sqrt{5}|t-s|}{\rho}+\dfrac{5(t-s)^2}{3\rho^2} \right)\exp\left\{\dfrac{-\sqrt{5}|t-s|}{\rho}\right\} \nonumber \\
C_{3}(t,s)&=\left(1+\dfrac{\sqrt{3}|t-s|}{\rho} \right)\exp\left\{\dfrac{-\sqrt{3}|t-s|}{\rho}\right\} \nonumber \\
C_{4}(t,s)&=\exp\left\{\dfrac{-|t-s|}{\rho}\right\}. \nonumber
\end{align}
}%
the first is also known as the Gaussian or squared exponential kernel and the last is also known as the exponential, Laplacian, or Ornstein-Uhlenbeck kernel.

Recall the all parameters discussed in Section \ref{s:es} will be fixed in all scenarios, except for the one where they are explicitly varied to consider their effect.

The scenario 1 was discussed in Section \ref{s:es}.
\subsubsection*{Scenario 2: Varying kernel range parameter $\rho$}
Here all defaults are used except the range parameter for the noise and RKHS (which are taken to be the same in all settings) that ranges from $0.002$ to $2$.  The results are presented in Figure \ref{DP_ro}.  We see very similar patterns to Scenario 1, where increasing $\rho$ increases the smoothing of both the estimate and its privacy enhanced version.  However, increasing $\rho$ smooths more than it shrinks and there is still a non-negligible difference between the two estimates for larger values, (e.g., $\rho=0.2$).  Practically, both $\rho$ and $\phi$ should be chosen together for the best performance, which we will explore further in Section \ref{s:app}.
\begin{figure}[ht!]
	\centering
	\includegraphics[width=0.85\columnwidth]{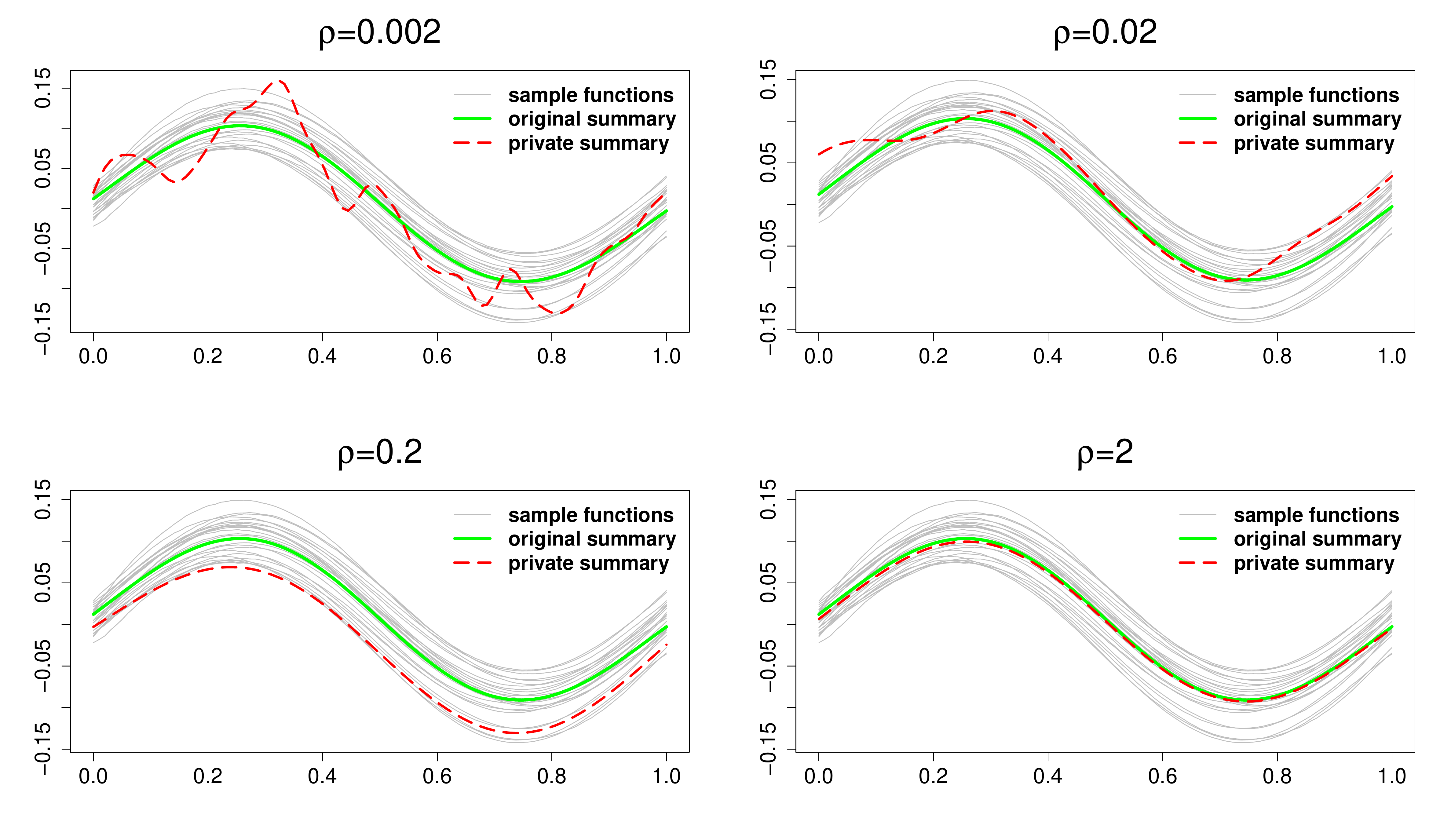}
	\caption{Original and private RKHS smoothing mean with Gaussian Kernel ($C_{1}$) for different values of kernel range parameter $\rho$ }
	\label{DP_ro}
\end{figure}

\subsubsection*{Scenario 3: Varying the kernel function $c(t,s)$}
Here we consider the four different kernels given in \eqref{Kernels} for both the noise and RKHS kernel (which are taken to be the same).  The results are summarized in Figure \ref{DP_kernel}.  All kernels give roughly the same pattern, however, $C_1$ produces curves which are infinitely differentiable, while the exponential kernel produces curves that are nowhere differentiable (they follow an Ornstein-Uhlenbeck process).  The two Mat\'ern covariances give paths that have either one ($C_3$) or two ($C_2$) derivatives.  Since the underlying function to be estimated is already very smooth, the kernel does not have a substantial impact.  However, for more irregular shapes, this choice can play a substantial role on the efficiency of the resulting RKHS estimate.
\begin{figure}[ht!]
	\centering
	\includegraphics[width=0.85\columnwidth]{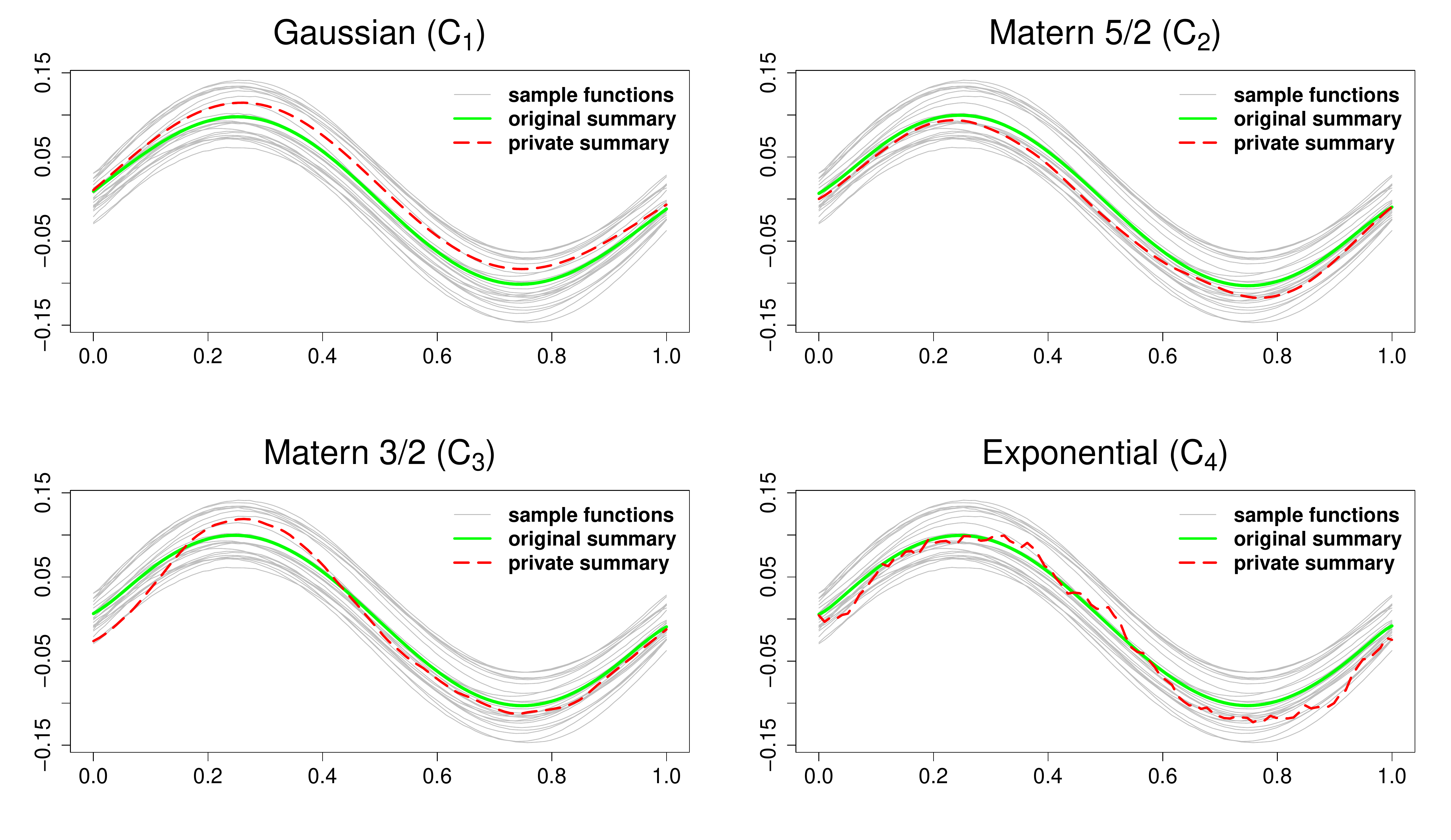}
	\caption{Original and private RKHS smoothing mean for different kernels}
	\label{DP_kernel}
\end{figure}

\subsubsection*{Scenario 4: Varying the smoothing parameter of samples $p$}
In this setting we vary $p$ from $1.1$ to $4$, which determines the smoothness of the data, $X_n(t)$.  Note that $p$ has to be strictly larger than $1$ or the $X_i$ will not be square integrable. Figure \ref{DP_p} summarizes these results.  As we can see, the smoothness of the curves has a smaller impact on the utility of the sanitized estimates as compared to other parameters.  As the curves become smoother, the global sensitivity decreases implying the need for less noise being added in order to maintain the desired privacy level, and thus resulting in a higher utility for the privacy enhanced curves.  However, the smoothness, in terms of derivatives, of the estimates is not affected, as this is determined by the kernel.
\begin{figure}[ht!]
	\centering
	\includegraphics[width=0.85\columnwidth]{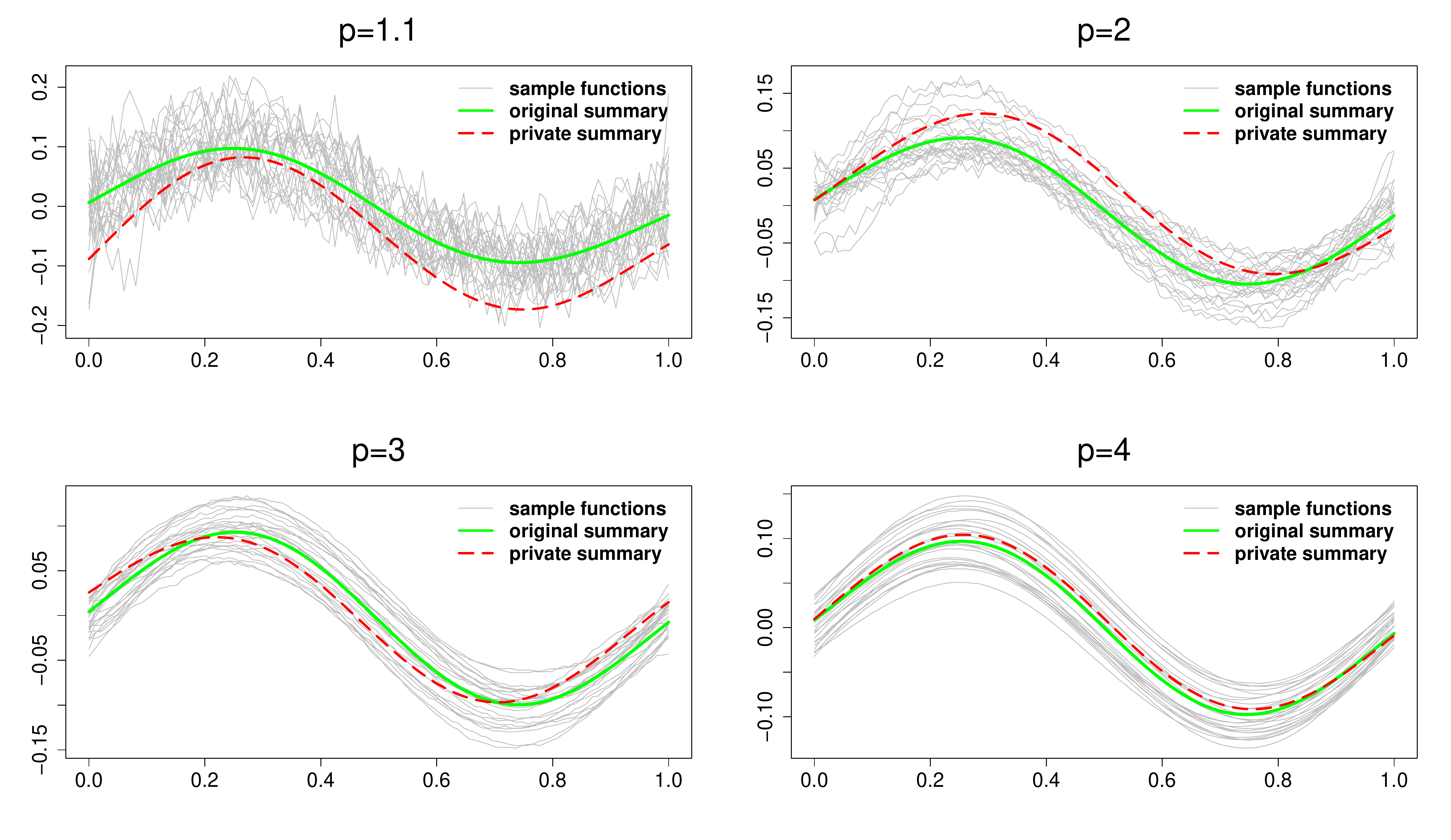}
	\caption{Original and private RKHS smoothing mean with Gaussian Kernel ($C_{1}$) for different values of smoothing parameter of samples $p$}
	\label{DP_p}
\end{figure}

\subsubsection*{Scenario 5: Varying the privacy parameters $(\epsilon,\delta)$}
In this setting we vary the privacy parameters, $\epsilon$ and $\delta$.  Figure \ref{DP_alpha} present the effects of varying $\epsilon$ from $5$ to $0.1$ while in Figure \ref{DP_beta} we vary $\delta$ from $0.1$ to $10^{-6}$. As we decrease the parameters, we are requiring a stricter form of privacy, which is reflected in the plots; recall that $\delta=0$ will give us the stricter form of DP, $\epsilon$-DP (also called $\epsilon$-DP).  As we decrease these values, the overall noise added increases, and we expect larger deviations of the sanitized estimates from the mean.  There is less sensitivity in the output to changes in $\delta$ than to $\epsilon$. However, as with the previous scenario these parameters play no role in the overall smoothness, in terms of derivatives of the resulting estimates.

\begin{figure}[ht!]
	\centering
	\includegraphics[width=0.85\columnwidth]{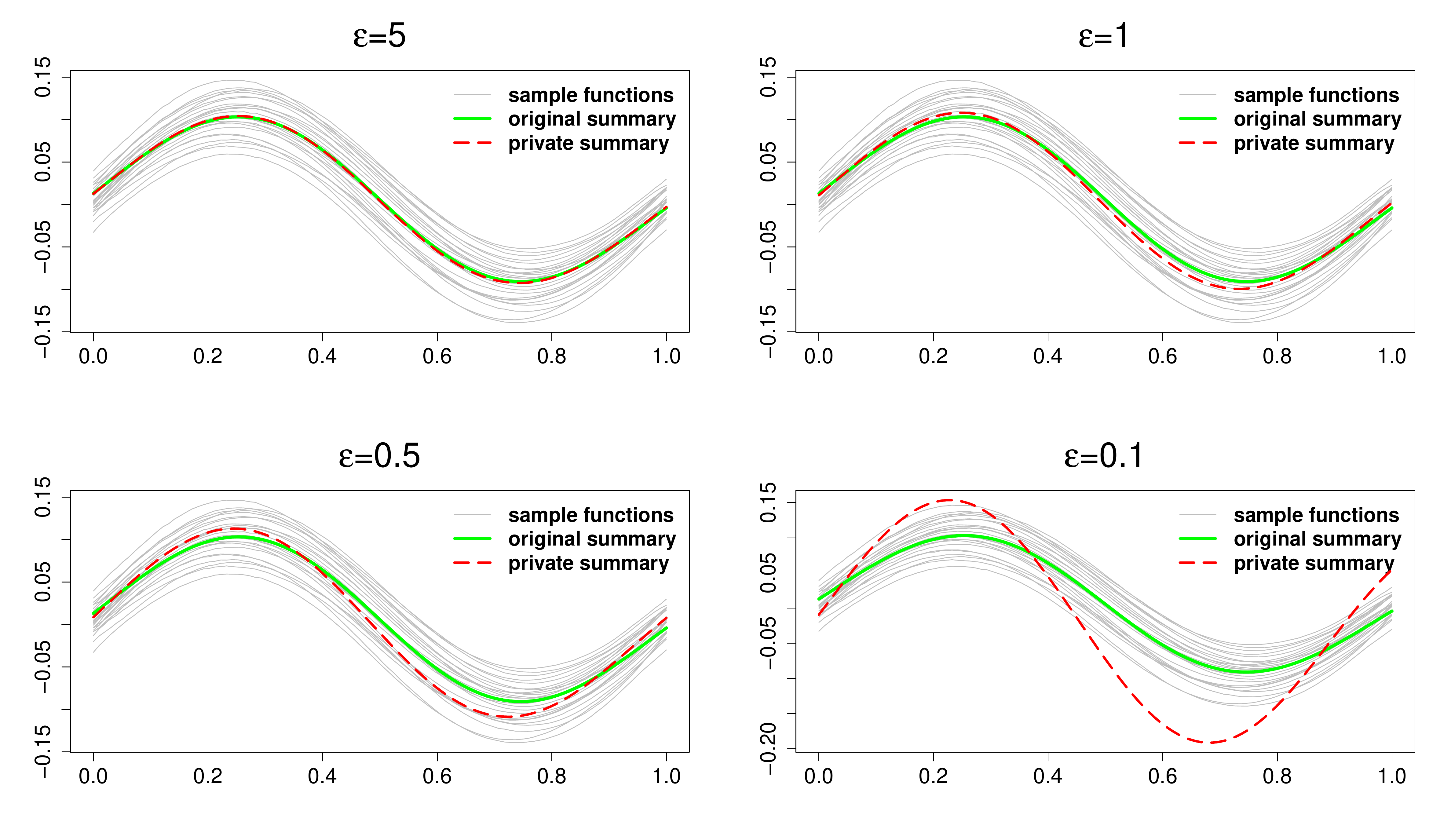}
	\caption{Original and private RKHS smoothing mean with Gaussian Kernel ($C_{1}$) for different values of privacy level parameter $\epsilon$ when $\delta=1$}
	\label{DP_alpha}
\end{figure}

\begin{figure}[ht!]
	\centering
	\includegraphics[width=0.85\columnwidth]{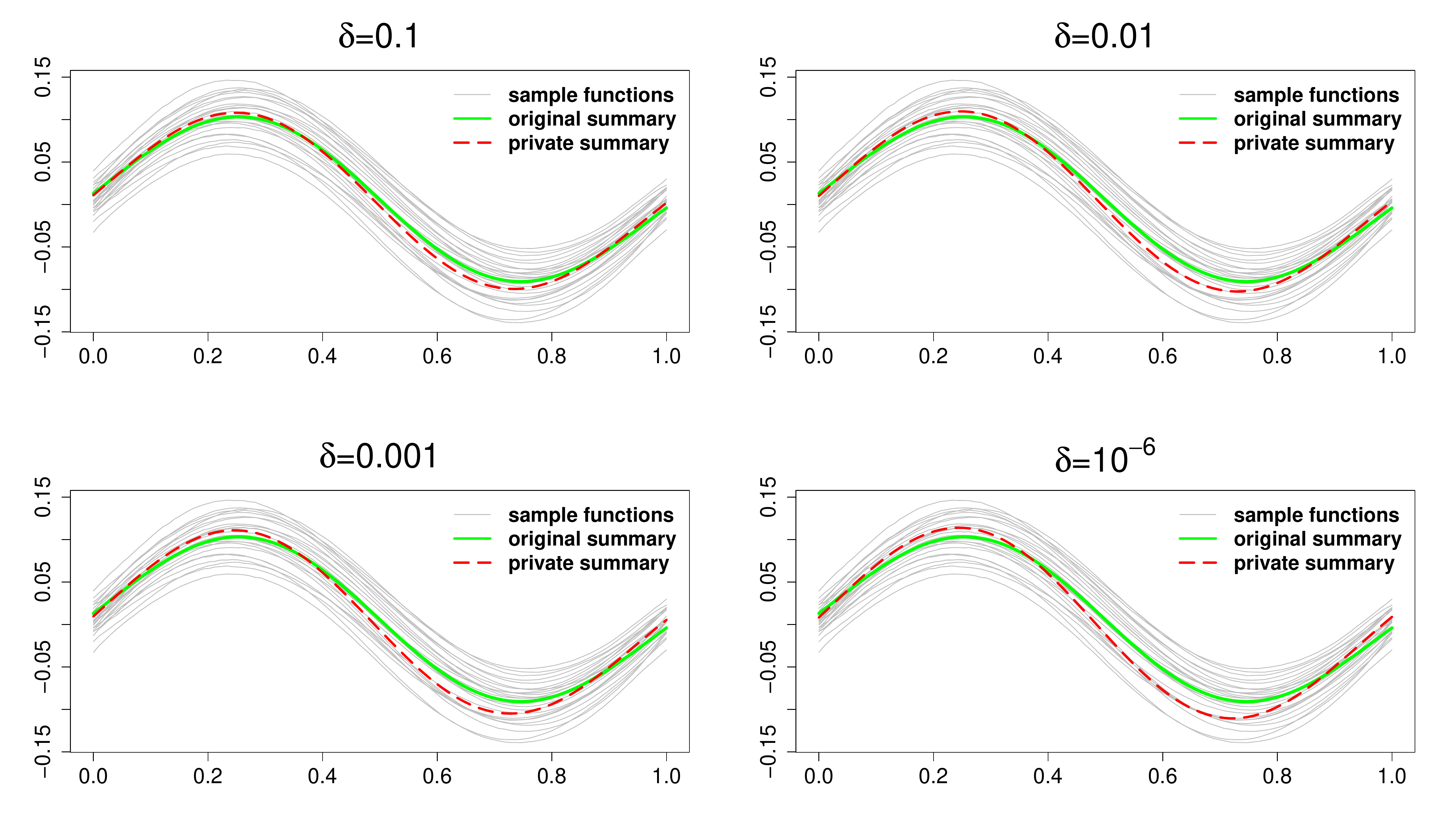}
	\caption{Original and private RKHS smoothing mean with Gaussian Kernel ($C_{1}$) for different values of privacy level parameter $\delta$ when $\epsilon=1$}
	\label{DP_beta}
\end{figure}
\subsubsection*{Scenario 6: Varying sample size N}
In Figure \ref{DP_N} we vary the sample size from $5$ to $100$.  The results are very similar to changing $\delta$ and $\epsilon$, as the sample size does not influence the smoothness of the curves (in terms of derivatives), but the accuracy of the estimate (green curve) gets much better and so does the utility of the privacy enhanced version.
\begin{figure}[ht!]
	\centering
	\includegraphics[width=0.85\columnwidth]{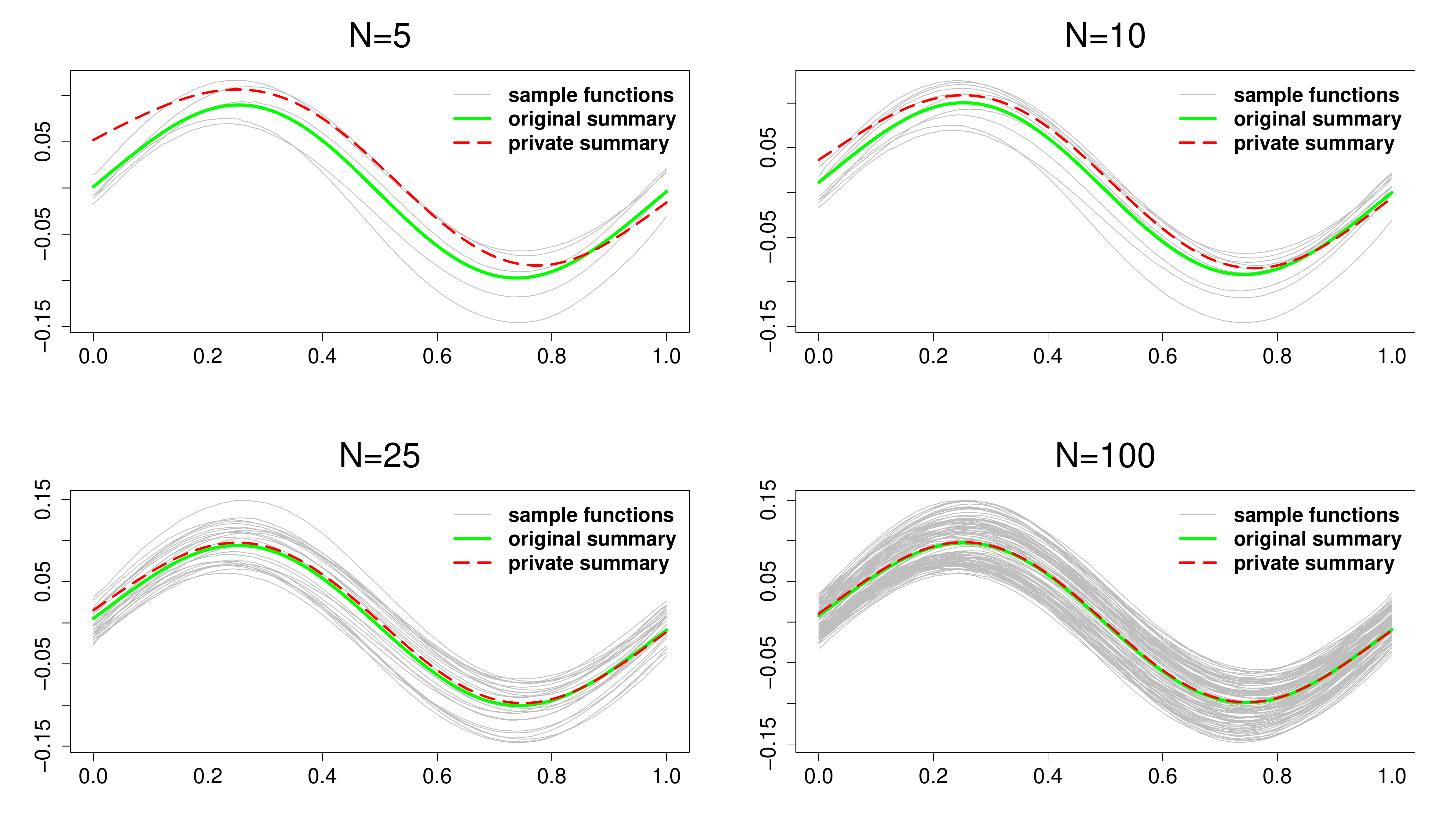}
	\caption{Original and private RKHS smoothing mean with Gaussian Kernel ($C_{1}$) for different sample sizes $N$ }
	\label{DP_N}
\end{figure}
\subsubsection*{Scenario 7: Different underlying mean function $\mu$ }
Lastly, in Figure \ref{DP_mu} we consider three additional mean functions. Overall, the actual function being estimated does not influence the utility of the privacy enhanced version, only the accuracy of the original estimate.  This is because the noise to be added is computed from the different smoothing parameters as well as the range of the $L^2$ norm of the data, not the underlying estimate itself.
\begin{figure}[ht!]
	\centering
	\includegraphics[width=0.85\columnwidth]{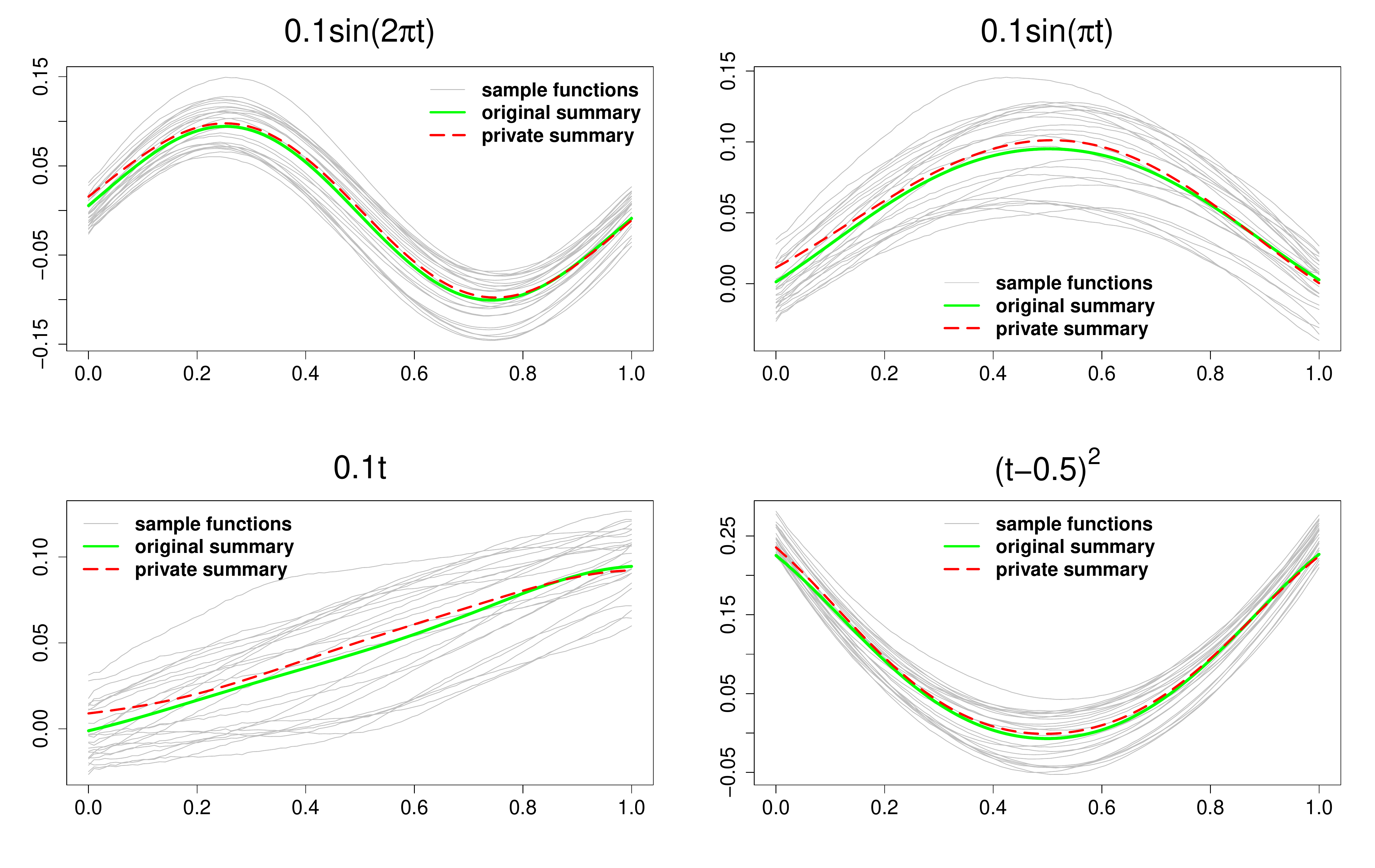}
	\caption{Original and private RKHS smoothing mean with Gaussian Kernel ($C_{1}$) for different initial mean functions $\mu$ }
	\label{DP_mu}
\end{figure}



\section{Extension of Diffusion Tensor Imaging} \label{s:eapp}
In this section our aim is to see the privacy enhanced RKHS smoothing estimate of the mean function discussed in Section \ref{s:app} for $C_{3}$ and $C_{4}$ based on the optimal parameters in Table \ref{Par_CV_Reg1} for CV. The results are given in Figures \ref{DTI_M3_R} and \ref{DTI_Exp_R} for Matern and Exponential kernels, respectively. In each case, we see that the utility of the privacy enhanced versions increases as $\phi$ increases, however, the largest values of $\phi$ produce estimates that are over smoothed. Here Table \ref{Par_CV_Irr1} represents the optimal parameters to generate privacy enhanced estimates for PCV.

\begin{table}[ht!]
	\centering
	\begin{tabular}{ccccc}
		\hline
		& & Exp. Kernel & Mat $3/2$ Kernel &Gau. Kernel \\ \hline
		& $\phi$ & optimum $\rho$ & optimum $\rho$ & optimum $\rho$ \\
		\hline
		1 & 0.0001 & 0.25 & 0.10 & 0.01 \\
		2 & 0.001 & 0.20 & 0.15 & 0.01 \\
		3 & 0.01 & 0.30 & 0.15 & 0.03 \\
		4 & 0.03 & 0.80 & 0.30 & 0.05 \\
		\hline
	\end{tabular}
	\caption{Optimum kernel range parameters $\rho$ for different kernels with using CV for each fixed penalty parameter $\phi$ in DTI dataset }  \label{Par_CV_Reg1}
\end{table}
\begin{table}[ht!]
	\centering
	\begin{tabular}{ccccc}
		\hline
		Kernel & range $\phi$ & range $\rho$ & optimum $\phi$ & optimum $\rho$ \\
		\hline
		$C_{1}$ & $[10^{-4},0.1]$ &  $[0.01,0.1]$ & 0.005 & 0.030 \\
		$C_{3}$ & $[10^{-4},0.1]$ &  $[0.05,0.5]$ & 0.005 & 0.250 \\		
		$C_{4}$ & $[10^{-4},0.1]$ & $[0.2,1]$ &0.010 & 0.466 \\
		\hline
	\end{tabular}
	\caption{Optimum penalty and range parameters $(\phi,\rho)$ for different kernels with PCV in CCA application.}  \label{Par_CV_Irr1}
\end{table}
\begin{figure}[ht!]
	\centering
	\includegraphics[width=0.85\columnwidth]{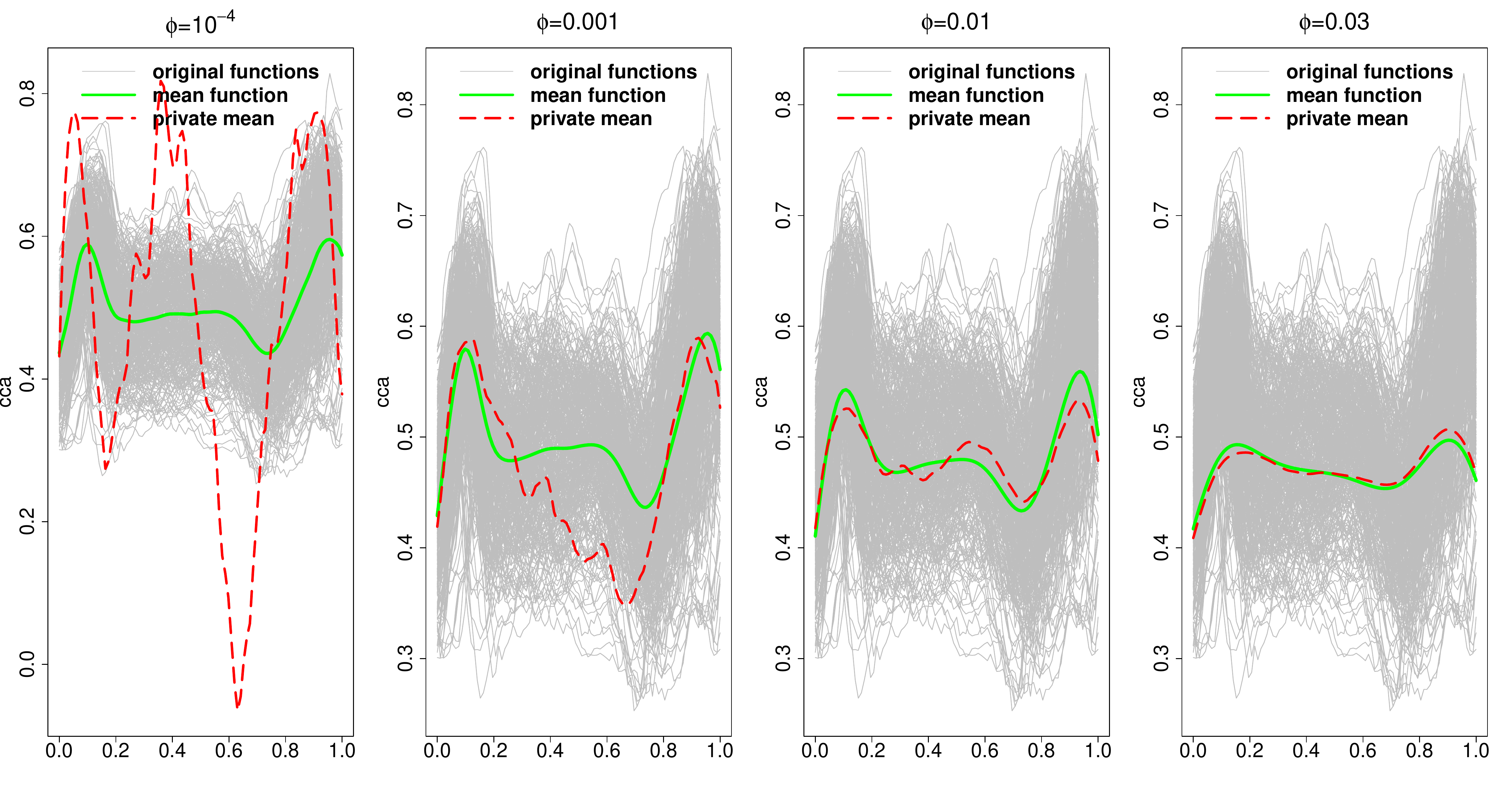}
	\caption{Mean estimate for CCA and its private release using Mat\'ern$(3/2)$ kernel ($C_{3}$) with CV.}
	\label{DTI_M3_R}
\end{figure}
\begin{figure}[ht!]
	\centering
	\includegraphics[width=0.85\columnwidth]{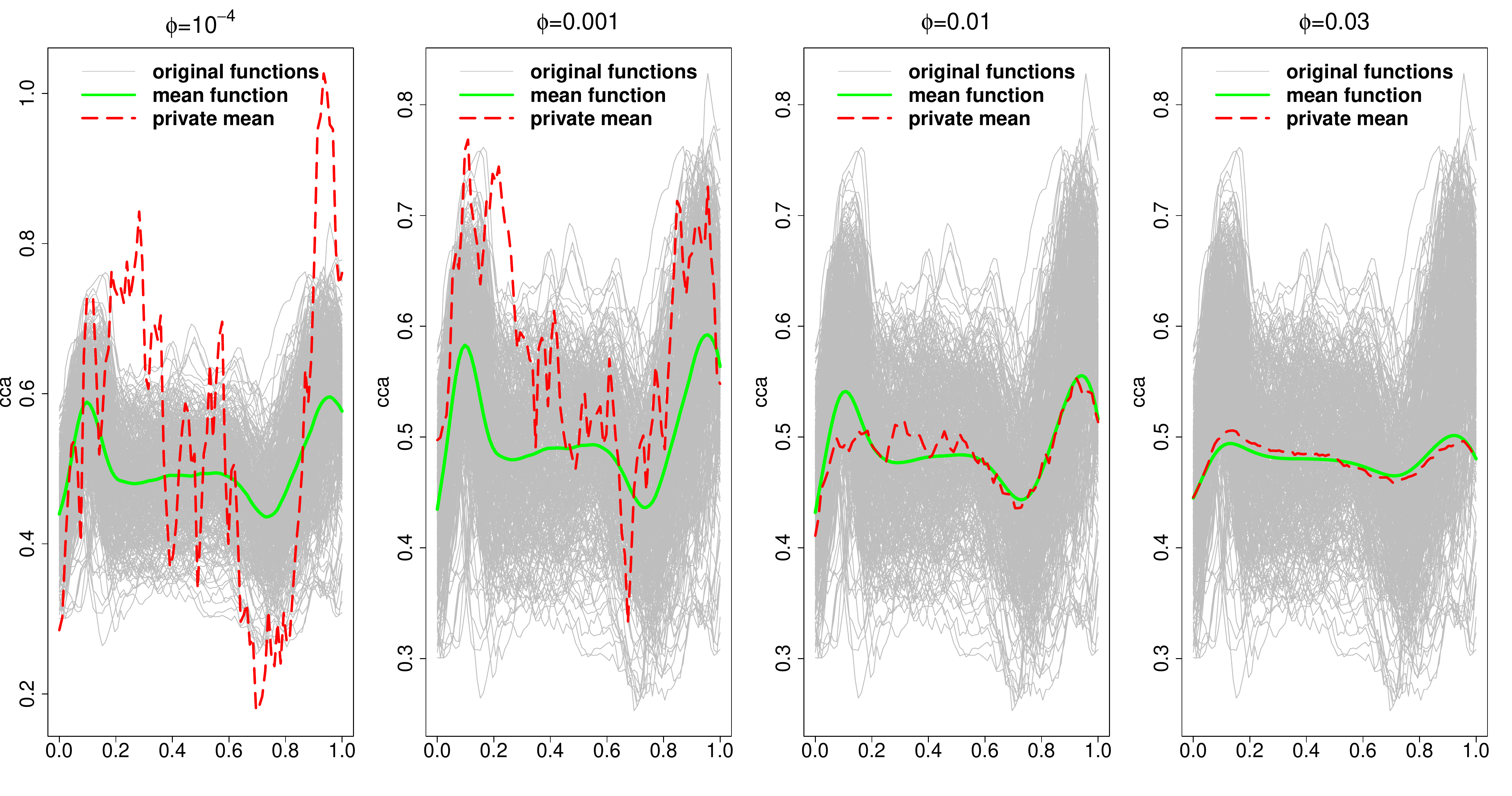}
	\caption{Mean estimate for cca and its private release using Exponential kernel ($C_{4}$) with CV.}
	\label{DTI_Exp_R}
\end{figure}
\end{document}